\newtheorem{theorem}{Theorem}[section]
\newtheorem{lemma}[theorem]{Lemma}
\newcommand{\T}{\mathbb{T}}
\newcommand{\sign}{{\rm sign}\thinspace}
\newcommand{\ep}{\varepsilon}
\newcommand{\pa}{\partial}
\newcommand{\alx}{x}
\newcommand{\bey}{y}
\newenvironment{proof}{\begin{trivlist} \item[] {\em Proof:}}{\hfill $\Box$
                       \end{trivlist}}
\newenvironment{proofthm}[1]{\begin{trivlist} \item[] {\em Proof of Theorem \ref{#1}:}}{\hfill $\Box$
                       \end{trivlist}}
\renewcommand*\l@section{\@dottedtocline{1}{0em}{1.5em}}
\renewcommand*\l@subsection{\@dottedtocline{2}{1.5em}{2.3em}}
\renewcommand*\l@subsubsection{\@dottedtocline{3}{3.8em}{3.7em}}
\numberwithin{equation}{section}
\begin{document}

\title{A note on stability shifting for the Muskat problem II: \\ Stable to Unstable and back to Stable}

\author{Diego C\'ordoba, Javier G\'omez-Serrano, and Andrej Zlato\v{s}}

\maketitle

\begin{abstract}
%In this note, we show that there exist solutions of the Muskat problem which shift stability regimes from stable to unstable and back to stable. 
In this note, we show that there exist solutions of the Muskat problem which shift stability regimes in the following sense: they start stable, then become unstable, and finally return back to the stable regime. 
This proves existence of double stability shifting in the direction opposite to the one shown in \cite{Cordoba-GomezSerrano-Zlatos:stability-shifting-muskat}.

\vskip 0.3cm
\textit{Keywords: Muskat problem, interface, incompressible fluid, porous media, Rayleigh-Taylor}

\end{abstract}

%\tableofcontents

\section{Introduction}

In this paper, we study two incompressible fluids with the same viscosity but different densities, $\rho^{+}$ and $\rho^{-}$, evolving in a two dimensional porous medium with constant permeability $\kappa$. The velocity $v$ is determined by Darcy's law
\begin{equation}\label{IIIdarcy}
\mu\frac{v}{\kappa}=-\nabla p-g\left(\begin{array}{cc}0\\ \rho\end{array}\right),
\end{equation}
where $p$ is the pressure, $\mu>0$ viscosity, and $g > 0$ gravitational acceleration. In addition, $v$ is incompressible:
\begin{equation}\label{IIIincom}
\nabla\cdot v=0.
\end{equation}
By rescaling properly, we can assume $\kappa = \mu=g=1$. The fluids also satisfy the conservation of mass equation
\begin{equation}\label{IIIconser}
\partial_t\rho+v\cdot\nabla\rho=0.
\end{equation}

This is known as the Muskat problem \cite{Muskat:porous-media}. We denote by $\Omega^{+}$ the region occupied by the fluid with density $\rho^{+}$ and by $\Omega^{-}$ the region occupied by the fluid with density $\rho^{-} \neq \rho^{+}$. The point $(0,\infty)$ belongs to $\Omega^{+}$, whereas the point $(0,-\infty)$ belongs to $\Omega^{-}$. All quantities with superindex $\pm$ will refer to $\Omega^{\pm}$ respectively. The interface between both fluids at any time $t$ is a planar curve $z(\cdot,t)$. 
We will work in the setting of horizontally periodic interfaces, although our results can be extended to the flat at infinity case.

A quantity that will play a major role in this paper is the Rayleigh-Taylor condition, which is defined as
\begin{align*}
RT(\alx,t)=-\left[ \nabla p^{-}(z(\alx,t))-\nabla p^{+}(z(\alx,t)) \right]\cdot\partial_\alx^\bot z(\alx,t),
\end{align*}
where we use the convention $(u,v)^{\perp} = (-v,u)$. If $RT(\alx,t)>0$ for all $\alx\in\mathbb{R}$, then we say that the curve is in the Rayleigh-Taylor stable regime at time $t$, and if $RT(\alx,t) \leq 0$ for some $\alx\in\mathbb{R}$, then we  say that the curve is in the Rayleigh-Taylor unstable regime. 

One can rewrite the system \eqref{IIIdarcy}--\eqref{IIIconser} in terms of the curve $z=(z^1,z^2)$, obtaining
\begin{align}\label{muskatinterface}
\partial_{t} z(\alx,t) = \frac{\rho^{-} - \rho^{+}}{2\pi} P.V. \int_\mathbb{R} \frac{z^1(\alx,t) - z^1(\bey,t)}{|z(\alx,t) - z(\bey,t)|^{2}}(\partial_{\alx}z(\alx,t) - \partial_{\bey}z(\bey,t)) d\bey.
\end{align}
In the horizontally periodic case with $z(\alx+2\pi,t)=z(\alx,t)+(2\pi,0)$,  the formula
\begin{align*}
\frac12\cot\frac{y}{2} = \frac{1}{y} + \sum_{n=1}^{\infty} \frac{2y}{y^{2} - (2\pi n)^{2}}
\end{align*}
can be used to show \cite{Castro-Cordoba-Fefferman-Gancedo-LopezFernandez:rayleigh-taylor-breakdown} that the velocity  satisfies
\begin{align}\label{muskatinterfaceperiodic}
\partial_{t} z(\alx,t) = \frac{\rho^{-} - \rho^{+}}{4\pi} \int_\mathbb{T} \frac{\sin(z^1(\alx,t) - z^1(\bey,t))(\partial_{\alx}z(\alx,t) - \partial_{\bey}z(\bey,t))}{\cosh(z^{2}(\alx,t) - z^{2}(\bey,t)) - \cos(z^{1}(\alx,t) - z^{1}(\bey,t))} d\bey.
\end{align}
  A simple calculation of the Rayleigh-Taylor condition in terms of $z$ yields
\begin{align*}
 RT(\alx,t) = (\rho^{-} - \rho^{+})\partial_{\alx} z^{1}(\alx,t).
\end{align*}
When the interface is a graph, parametrized as $z(\alx,t)=(\alx,f(\alx,t))$, equation \eqref{muskatinterface} becomes
\begin{align}\label{muskatinterfacegraph}
 \partial_t f(x,t) = \frac{\rho^{-} - \rho^{+}}{4\pi} \int_\mathbb{T} \frac{\sin(x-y)(\partial_{x}f(x,t) - \partial_{y}f(y,t))}{\cosh(f(x,t) - f(y,t)) - \cos(x-y)} d y
\end{align}
and the Rayleigh-Taylor condition simplifies to
\begin{align*}
RT(\alx,t) =\rho^{-}-\rho^{+}.
\end{align*}
The curve is now in the RT stable regime whenever $\rho^{+}<\rho^{-}$, that is, the denser fluid is at the bottom. From now on, we assume that $\rho^{-} - \rho^{+} = 4\pi$, which can be done after an appropriate scaling in time.

The Muskat problem  has been studied in many works. A proof of local existence of classical solutions in the Rayleigh-Taylor stable regime in $H^{3}$ and ill-posedness in the unstable regime appears in \cite{Cordoba-Gancedo:contour-dynamics-3d-porous-medium}. See also \cite{Constantin-Gancedo-Shvydkoy-Vicol:global-regularity-muskat-finite-slope} for an improvement on the regularity (to $W^{2,p}$ spaces). In the one phase case (i.e. when one of the densities and permeabilities is zero) local existence in $H^{2}$ was proved in \cite{Cheng-GraneroBelinchon-Shkoller:well-posedness-h2-muskat}.

A maximum principle for $\| \pa_xf(\cdot,t)\|_{L^\infty}$ can be found in \cite{Cordoba-Gancedo:maximum-principle-muskat}. Moreover, the authors showed in \cite{Cordoba-Gancedo:maximum-principle-muskat} that if $\|\partial_{x} f_0\|_{L^\infty}<1$, then $\|\partial_{x} f(\cdot,t)\|_{L^\infty}\le \|\partial_{x} f_0\|_{L^\infty}$ for all $t>0$. Further work has shown existence of finite time turning \cite{Castro-Cordoba-Fefferman-Gancedo-LopezFernandez:rayleigh-taylor-breakdown} (i.e., the curve ceases to be a graph in finite time and the Rayleigh-Taylor condition changes sign to negative somewhere along the curve). The gap between these two results (i.e., the question whether the constant 1 is sharp or not for guaranteeing global existence) is still an open question, and there is numerical evidence of data with $\|\pa_x f_{0}\|_{L^{\infty}} = 50$ which turns over \cite{Cordoba-GomezSerrano-Zlatos:stability-shifting-muskat}.

%The precise result is the following:

%\begin{theorem}\label{oneone}
%There exists a nonempty open set of initial data in $H^4$ with
%Rayleigh-Taylor strictly positive (i.e., $RT(\alx,0)>0$ for all $\alx\in\mathbb{R}$) such that the solutions of \eqref{muskatinterface} have $RT(\alx,t)<0$ for some finite time $t>0$ and all $\alx$ in some nonempty open interval.
%\end{theorem}

As was demonstrated in \cite{Castro-Cordoba-Fefferman-Gancedo:breakdown-muskat}, the curve may lose regularity after shifting from the stable regime to the unstable regime.  However, the possibility of it recoiling and returning to the stable regime has not been excluded.  The occurrence of this phenomenon is the main result of this note, Theorem \ref{theoremshifting}. 
(In Theorem \ref{T.1.1} we also extend this to a proof of existence of the quadruple stability shift scenario unstable $\to$ stable $\to$ unstable $\to$ stable $\to$ unstable.)  In \cite{Cordoba-GomezSerrano-Zlatos:stability-shifting-muskat} we showed that there exist curves which undergo the unstable $\rightarrow$ stable $\rightarrow$ unstable  transition, so this settles the question about existence of double stability shift scenarios in both directions.

 More general models, which take into account finite depth or non-constant permeability, and which also exhibit (single) turning were studied in \cite{Berselli-Cordoba-GraneroBelinchon:local-solvability-singularities-muskat,GomezSerrano-GraneroBelinchon:turning-muskat-computer-assisted}.  The estimates in \cite{GomezSerrano-GraneroBelinchon:turning-muskat-computer-assisted} were carried out by rigorous computer-assisted methods, as opposed to the traditional pencil and paper ones in \cite{Berselli-Cordoba-GraneroBelinchon:local-solvability-singularities-muskat}. %All these results are local in time, and the techniques employed to prove them rely on a local analysis of the equations, therefore it is not possible to conclude global properties of the solutions from  them. 
 
 Concerning global existence, the first proof for small initial data was carried out in \cite{Siegel-Caflisch-Howison:global-existence-muskat} in the case where the fluids have different viscosities and the same densities (see also \cite{Cordoba-Gancedo:contour-dynamics-3d-porous-medium} for the setting of the present paper --- different densities and the same viscosities --- and also \cite{Cheng-GraneroBelinchon-Shkoller:well-posedness-h2-muskat} for the general case). 
Global existence for medium-sized initial data was established in \cite{Constantin-Cordoba-Gancedo-Strain:global-existence-muskat,Constantin-Cordoba-Gancedo-RodriguezPiazza-Strain:muskat-global-2d-3d}. In the case where surface tension is taken into account, global existence was shown in \cite{Escher-Matioc:parabolicity-muskat,Friedman-Tao:nonlinear-stability-muskat-capillary-pressure}. Global existence for the confined case was treated in \cite{GraneroBelinchon:global-existence-confined-muskat}. A blow-up criterion was found in \cite{Constantin-Gancedo-Shvydkoy-Vicol:global-regularity-muskat-finite-slope}.

Recent advances in computing power have made  possible rigorous computer-assisted proofs.  Of course,
%Nowadays, the bigger computation capacity of computers has lead to their use as a mathematical tool. However, 
floating-point operations can result in numerical errors, and
%. To deal with this matter and be able to prove rigorous results, 
we will employ interval arithmetics to deal with this issue.  Instead of working with arbitrary real numbers, we perform computations over intervals which have representable numbers as endpoints. On these objects, an arithmetic is defined in such a way that we are guaranteed that for every $x \in X, y \in Y$
\begin{align*}
x \star y \in X \star Y,
\end{align*}
for any operation $\star$. For example,
\begin{align*}
[\underline{x},\overline{x}] + [\underline{y},\overline{y}] & = [\underline{x} + \underline{y}, \overline{x} + \overline{y}] \\
[\underline{x},\overline{x}] \times [\underline{y},\overline{y}] & = [\min\{\underline{x}\underline{y},\underline{x}\overline{y},\overline{x}\underline{y},\overline{x}\overline{y}\},\max\{\underline{x}\underline{y},\underline{x}\overline{y},\overline{x}\underline{y},\overline{x}\overline{y}\}].
\end{align*}
We can also define the interval version of a function $f(X)$ as an interval $I$ that satisfies that for every $x \in X$, $f(x) \in I$. Rigorous computation of integrals has been theoretically developed since the seminal works of Moore and many others (see \cite{Berz-Makino:high-dimensional-quadrature,Kramer-Wedner:adaptive-gauss-legendre-verified-computation,Lang:multidimensional-verified-gaussian-quadrature,Moore-Bierbaum:methods-applications-interval-analysis,Tucker:validated-numerics-book} for just a small sample).
For readability purposes, instead of writing the intervals as, for instance, $[123456,123789]$, we will sometimes instead refer to them as $123^{456}_{789}$.

This note is organized as follows.  In Section \ref{SectionTheorem} we prove Theorems \ref{theoremshifting} and \ref{T.1.1}, and in Section \ref{SectionTechnicalDetails} we provide technical details regarding the performance and implementation of the computations. The appendix contains a detailed derivation and enumeration of all the necessary integrals which have to be rigorously computed, their enclosures, and the performance of the computations.

\section{The main result}
\label{SectionTheorem}

The following theorem is the main result of this paper (see also Theorem \ref{T.1.1} below).

\begin{theorem}\label{theoremshifting}
There exist $T>\gamma>0$  and a spatially analytic solution $z$ to \eqref{muskatinterfaceperiodic} on the time interval $[-T,T]$  such that $z(\cdot,t)$ is a graph of a smooth function of $x$ when $|t|\in[T-\gamma,T]$
%for each $t\in(-T,-\frac T2)\cup(\frac T2,T)$ 
(i.e., $z$ is in the stable regime near $t=\pm T$)  but $z(\cdot,t)$ is not a graph of a function of $x$ when $|t|\le \gamma$ (i.e., $z$ is in the unstable regime near $t=0$). 
%The solution develops vertical tangents at times $t_\pm$. 
% There exists an analytic initial curve $z(\alx,0)$ whose analytic extension is $H^8$ on the boundary of its strip of analyticity, and also times $-T<t_1<0<t_0<T$ and $\gamma>0$ such that the following hold.  The corresponding solution of \eqref{muskatinterface}  exists for $t\in (-T,T)$ in the class of analytic functions of $\alx$ whose analytic extensions are $H^8$ on the boundaries of their strips of analyticity,
%and it is not a graph of a function of $x$ for each $t\in(t_1,t_0)$ (i.e., it is in the unstable regime for these $t$) and it is for $t\in(t_1-\gamma,t_1)\cup (t_0,t_0+\gamma)$ (i.e., it is in the stable regime for these $t$). The solution develops vertical tangents at times $t_1$ and $t_0$. 
\end{theorem}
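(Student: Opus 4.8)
The plan is to exploit the time-reversibility structure of the Muskat equation together with a backward-in-time argument starting from a carefully chosen unstable profile at $t=0$.

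\medskip

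\noindent\textbf{Step 1: Choose a symmetric unstable initial curve.} I would fix, at $t=0$, a spatially analytic, periodic curve $z_0$ that is \emph{not} a graph over $x$ — for instance a curve symmetric under the reflection $(x,y)\mapsto(-x,y)$ that has a vertical tangent and bends back on itself near $x=0$, so $\partial_\alx z^1_0$ changes sign and $RT=(\rho^--\rho^+)\partial_\alx z^1_0$ becomes nonpositive somewhere. The key point is to pick $z_0$ so that the normal velocity given by the right-hand side of \eqref{muskatinterfaceperiodic} pushes the curve, both forward and backward in time, toward becoming a graph: concretely, one wants the sign of $\partial_t(\partial_\alx z^1)$ at the turning point(s) at $t=0$ to be such that the ``overhang'' opens up as $|t|$ increases. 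This is exactly the turning mechanism of \cite{Castro-Cordoba-Fefferman-Gancedo-LopezFernandez:rayleigh-taylor-breakdown} run in reverse (and on both sides of $t=0$), and verifying the sign condition is where rigorous interval-arithmetic evaluation of the explicit integral in \eqref{muskatinterfaceperiodic} (and of enough of its $\alx$- and $t$-derivatives) enters.

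\medskip

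\noindent\textbf{Step 2: Local well-posedness in analytic classes.} Because $z_0$ is not a graph, the problem is ill-posed in Sobolev spaces, but it is locally well-posed in a scale of spaces of analytic functions on a complex strip (Cauchy–Kovalevskaya–type / abstract Cauchy theorem in Banach scales, as used for Muskat turning). I would invoke this to get a unique spatially analytic solution $z$ on some $[-T,T]$ with $z(\cdot,0)=z_0$, analytic in $\alx$ (on a shrinking strip) and analytic in $t$. The time-reversibility of \eqref{muskatinterfaceperiodic} — the equation is invariant under $t\mapsto -t$ together with $z\mapsto$ the same $z$ run backwards — means a solution on $[0,T]$ automatically extends to $[-T,T]$; combined with the reflection symmetry of $z_0$ one gets $z(-x,-t) = $ (the reflection of) $z(x,t)$, so it suffices to analyze $t\ge 0$.

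\medskip

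\noindent\textbf{Step 3: Propagate the graph/non-graph dichotomy in time.} By continuity in $t$ of $z$ in the analytic norm (hence in $C^1_\alx$), the curve stays non-graph for $|t|\le\gamma$ for some small $\gamma>0$: this is immediate once $\partial_\alx z^1_0$ is strictly negative at the turning point, since that persists under small perturbation. For the stable end, I would show $\partial_\alx z^1(\alx,t)>0$ for all $\alx$ once $|t|$ is close to $T$. This is the delicate part: one needs a quantitative statement that the turning at $t=0$ is ``complete'' by time $T-\gamma$, i.e. the overhang has fully unwound. The strategy is the one from the stability-shifting works: evaluate $\partial_\alx z^1$, and its evolution, at a discrete set of times via the explicit formula \eqref{muskatinterfaceperiodic} differentiated in $\alx$, control the time-Taylor remainder using analytic a priori bounds on a complex time-disk, and bound everything rigorously with interval arithmetic (the integrals listed in the appendix). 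One concludes $\partial_\alx z^1(\cdot,t)$ is a graph-defining positive function for $t$ in a full subinterval $[T-\gamma,T]$, and by symmetry also for $t\in[-T,-(T-\gamma)]$.

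\medskip

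\noindent The main obstacle I expect is \textbf{Step 3's quantitative control over a time interval of non-infinitesimal length}: local analytic well-posedness only gives a solution on an a priori tiny time interval with norms that may blow up, so one must either (a) choose $z_0$ extremely carefully (low-dimensional, e.g. a trigonometric polynomial perturbation of a fixed profile) so that the explicit integrals and their derivatives are computable with certified error bounds, and (b) produce a self-improving analytic a priori estimate that keeps the strip width from collapsing on $[0,T]$, so that a fixed $T>\gamma$ works. Matching the ``turning has finished by $T-\gamma$'' certificate to the ``still well-posed at $T$'' certificate — i.e. making the rigorous numerics close on a single interval — is the crux, and it is precisely what the computer-assisted apparatus described in the introduction is built to handle.
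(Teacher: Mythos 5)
Your overall architecture (analytic local well-posedness from a non-graph initial curve, sign conditions on time derivatives of $\pa_\alx z^1$ at the turning point verified by interval arithmetic, a Taylor expansion in time) points in the right direction, but two load-bearing steps are missing or wrong, and the paper's actual mechanism differs in a way that matters. In Step 1 you ask for ``the sign of $\pa_t(\pa_\alx z^1)$ at the turning point'' to be such that the overhang opens up as $|t|$ increases. No choice of sign can achieve this in both time directions: if $\pa_t\pa_\alx z^1(0,0)>0$ the overhang deepens backward in time, and vice versa. The paper's key idea is precisely to make this first derivative \emph{vanish} --- the amplitude $A(\ep)$ of the mode $\sin(2x)$ in $z^2$ is tuned (Lemma \ref{lemacomputer}, part 1, via interval arithmetic plus the intermediate value theorem) so that $\pa_t\pa_\alx z^1_\ep(0,0)=0$ --- and then to certify the \emph{second-order} condition $\pa_{tt}\pa_\alx z^1_\ep(0,0)\ge 30$. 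This yields $\pa_\alx z^1_\ep(0,t)\approx-\ep+15\,t^2$, even in $t$ to leading order, which is what makes both $t>0$ and $t<0$ stabilizing. Without identifying this degenerate second-order structure, the construction fails in one of the two time directions; the reflection/time-reversal symmetry you invoke does not substitute for it.

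Second, your self-identified ``main obstacle'' --- certifying the solution over a time interval of non-infinitesimal length --- does not arise in the paper, and your proposed remedy (evaluating the integrals at a discrete set of later times) would require certified knowledge of the solution at $t\neq 0$, which the rigorous quadrature does not provide. The paper sidesteps the issue by making the overhang itself a small parameter: $\pa_\alx z^1_\ep(0,0)=-\ep$, and the curve becomes a graph already by time $|t|\sim\sqrt\ep$, well inside the local existence time, which is uniform in small $\ep$. All rigorous integrals are evaluated only at $t=0$ from the explicit initial data; the behavior at later times is controlled purely by a Taylor expansion in $(x,t)$ around $(0,0)$ whose remainder is bounded by soft a priori estimates (the chord-arc bound and the uniform derivative bounds \eqref{2.2}), with the $x$-expansion using oddness of the curve to kill $\pa_t\pa_\alx^2 z^1(0,0)$; positivity of $\pa_\alx z^1$ away from the turning point comes from $\pa_\alx z^1_\ep(x,0)\gtrsim x^2-\ep$ together with the uniform bound on $\pa_t\pa_\alx z^1$. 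So the correct closing of the argument is not matching a ``turning finished by $T-\gamma$'' certificate against a ``still well-posed at $T$'' certificate over a fixed interval, but a two-parameter degeneration $\ep\to 0$, $T=\sqrt\ep$, $\gamma=\ep/2C$.
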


%\begin{rem}
%Even though the local existence theorem from \cite{Castro-Cordoba-Fefferman-Gancedo-LopezFernandez:rayleigh-taylor-breakdown} provides a solution in $H^{4}$, we use the $H^{8}$ regularity to bound some of the terms that appear in the calculations. We do not claim that the $H^{8}$ regularity is sharp.
%\end{rem}

The intuition behind this result comes from the numerical experiments which were started in \cite{Cordoba-GomezSerrano-Zlatos:stability-shifting-muskat}. These suggested  existence of curves which are (barely) in the unstable regime, 
%which can be made arbitrarily close to a vertical tangent in the unstable regime, 
and such that the evolution both forward and backwards in time transports them into the stable regime. (We note that neither the velocity nor any other quantity was observed to become degenerate  in these experiments.)
The following lemma constructs a family of such curves.

\begin{lemma}
\label{lemacomputer}
Let $\ep\ge 0$ and consider the initial curve $z_{\ep}(\alx,0)=(z^{1}_{\ep}(\alx,0),z^{2}_{\ep}(\alx,0))$, with
%\[
% z_{\ep}(\alx,0) = (\alx - \sin(\alx) - \ep \sin(\alx), A(\ep) \sin(2\alx)),
%\]
\begin{align*}
 z^{1}_{\ep}(\alx,0) & = \alx - \sin(\alx) - \ep \sin(\alx), \\
z^{2}_{\ep}(\alx,0) & = A(\ep) \sin(2\alx).
\end{align*}
\begin{enumerate}
\item For any $\ep\in[0,10^{-6}]$, there exists $A(\ep)\in(1.08050, 1.08055)$ such that if $z_\ep$ solves \eqref{muskatinterfaceperiodic} with  initial data $z_{\ep}(\alx,0)$, then
\begin{align*}
 \pa_{t} \pa_{\alx} z^{1}_{\ep}(0,0) = 0.
\end{align*}
\item  There are $T,C>0$ such that for any $\ep\in[0,10^{-6}]$ and $A(\ep)$ from 1., there is a unique analytic solution $z_{\ep}$ of \eqref{muskatinterfaceperiodic} on the time interval $(-T,T)$ with initial data $z_{\ep}(\alx,0)$, and it satisfies
%be the solution of \eqref{muskatinterfaceperiodic} with initial condition $z^{0}_{\ep}(\alx)$. 
%Then, for every $0 \leq \ep \leq 10^{-6}$:
%\item For that $A(\ep)$, we have that
\begin{equation} \label{2.1}
\pa_{tt} \pa_{\alx} z^{1}_{\ep}(0,0)\ge 30
\end{equation}
%uniformly in $\ep$.
as well as
\begin{align} \label {2.2}
 |\pa_{t}\pa^{3}_{x}z_{\ep}^{1}(x,t)| + |\pa_{t}^{2}\pa^{2}_{x}z_{\ep}^{1}(x,t)| + | \pa_{t}^{3} \pa_{x} z_{\ep}^{1}(x,t)| \leq C
\end{align}
for each $(x,t)\in\mathbb{T}\times(-T,T)$.
\end{enumerate}
\end{lemma}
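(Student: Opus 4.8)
The plan is to combine an abstract Cauchy-Kovalevskaya existence theorem with a finite sequence of rigorous, interval-arithmetic-verified computations of certain explicit (if complicated) integrals coming from the right-hand side of \eqref{muskatinterfaceperiodic} and its time derivatives evaluated at $t=0$. First I would set up the analytic framework: the initial curve $z_\ep(\cdot,0)$ is an explicit trigonometric polynomial, hence spatially analytic, and its graph/non-graph character is governed by $\pa_\alx z^1_\ep(\alx,0) = 1-(1+\ep)\cos\alx$, which vanishes at $\alx=0$ when $\ep=0$ and is slightly negative near $\alx=0$ when $\ep>0$. Since the right-hand side of \eqref{muskatinterfaceperiodic} is an analytic operator on a suitable scale of Banach spaces of analytic functions (complex strips $\{|\Im\alx|<r\}$ with sup-norm), the abstract Cauchy-Kovalevskaya theorem (in the Nishida/Nirenberg form used in \cite{Castro-Cordoba-Fefferman-Gancedo-LopezFernandez:rayleigh-taylor-breakdown}) yields a unique analytic solution on a time interval $(-T,T)$ whose length $T$ depends only on the analyticity radius and the size of the initial data; uniformity over $\ep\in[0,10^{-6}]$ and over the narrow band of admissible $A(\ep)$ is immediate because these data vary in a compact set. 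This same machinery gives, by Cauchy estimates in both $\alx$ and $t$, the bound \eqref{2.2} with a constant $C$ depending only on $T$ and the chosen analyticity radius — so \eqref{2.2} is essentially a free byproduct of the well-posedness construction and requires no computer assistance.

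The substantive content is in the two pointwise identities/inequalities at $(\alx,t)=(0,0)$, namely part 1 (that $A(\ep)$ can be chosen so $\pa_t\pa_\alx z^1_\ep(0,0)=0$) and the estimate \eqref{2.1} that $\pa_{tt}\pa_\alx z^1_\ep(0,0)\ge 30$. The key observation is that, because \eqref{muskatinterfaceperiodic} is an evolution equation, $\pa_t z_\ep(\alx,0)$ is an \emph{explicit integral functional} of the known data $z_\ep(\cdot,0)$; differentiating \eqref{muskatinterfaceperiodic} once or twice in $t$ and using the equation to eliminate the time derivatives on the right, one expresses $\pa_t\pa_\alx z^1_\ep(0,0)$ and $\pa_{tt}\pa_\alx z^1_\ep(0,0)$ as finite combinations of $\alx$-derivatives (at $\alx=0$) of one-dimensional principal-value integrals over $\mathbb{T}$ whose integrands are explicit elementary functions of $\bey$ built from $\sin$, $\cos$, $\cosh$ and the trigonometric-polynomial data. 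For part 1 one shows $g(\ep,A):=\pa_t\pa_\alx z^1_\ep(0,0)$ is continuous (indeed analytic) in $A$, computes rigorously that $g(\ep,1.08050)$ and $g(\ep,1.08055)$ have opposite signs uniformly for $\ep\in[0,10^{-6}]$, and invokes the intermediate value theorem to get $A(\ep)\in(1.08050,1.08055)$; one should also check monotonicity (or at least a sign pattern) in $A$ to make the selection well-defined. For \eqref{2.1} one simply evaluates the corresponding integral expression for $\pa_{tt}\pa_\alx z^1_\ep(0,0)$ with $A$ ranging over the interval $(1.08050,1.08055)$ and $\ep$ over $[0,10^{-6}]$, obtaining a rigorous enclosure bounded below by $30$.

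All of these integral evaluations are carried out by interval arithmetic combined with validated quadrature (as described in the introduction and detailed in the appendix): one subdivides $\mathbb{T}$, bounds the integrand and its derivatives on each subinterval, handles the principal-value singularity at $\bey=\alx=0$ by an explicit cancellation/Taylor-expansion argument near the diagonal, and sums the enclosures. The appendix's enumeration of "all the necessary integrals" is precisely the list of these functionals and their verified enclosures. I expect the main obstacle to be the \emph{bookkeeping and singularity handling}: writing $\pa_{tt}\pa_\alx z^1(0,0)$ explicitly requires differentiating the singular kernel in \eqref{muskatinterfaceperiodic} twice in time (each time re-substituting the equation) and then three times in space at the base point, producing many terms each of which is a principal-value integral with a mild singularity whose subtraction must be arranged so that the remaining integrand is genuinely bounded and amenable to validated quadrature; a secondary difficulty is ensuring the quadrature error bounds are tight enough to separate $g$ from $0$ at the two endpoints (for the sign change) and to clear the threshold $30$ with room to spare, uniformly in the small parameter $\ep$. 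Given \eqref{muskatinterfaceperiodic}'s smooth, well-separated-away-from-the-diagonal structure and the smallness and explicitness of the data, none of these steps is expected to fail, but they are what make the computer assistance necessary.
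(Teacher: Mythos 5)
Your proposal is correct and follows essentially the same route as the paper: part 1 via continuity in $A$ plus the intermediate value theorem, with rigorously verified opposite signs at $A=1.08050$ and $A=1.08055$ (monotonicity in $A$ is not needed and the paper does not check it); existence, uniqueness, and uniformity in $\ep$ from the analytic Cauchy--Kovalevskaya machinery of Theorem 5.1 in \cite{Castro-Cordoba-Fefferman-Gancedo-LopezFernandez:rayleigh-taylor-breakdown}; and \eqref{2.1} by propagating the full interval $[1.08050,1.08055]$ for $A(\ep)$ through the validated quadrature (the paper obtains the enclosure $[38.706,48.787]$). Two points where you diverge or are imprecise: first, for \eqref{2.2} you invoke Cauchy estimates in both $x$ and $t$, treating the bound as a free byproduct of analyticity, whereas the paper proceeds by brute force --- it repeatedly substitutes the equation for the time derivatives, isolates the most singular term in each of $\pa_t\pa_x^3 z^1$, $\pa_t^2\pa_x^2 z^1$, $\pa_t^3\pa_x z^1$, and bounds it using a uniform chord-arc estimate that controls the denominator $\cosh(\cdot)-\cos(\cdot)$ from below by $y^2/D$ together with uniform bounds on $\|\pa_x^k z_\ep(\cdot,t)\|_{L^\infty}$; your route is plausible (the Nishida-type theorem does give time analyticity) but it quietly requires uniformity of the time-analyticity radius in $\ep$, which the paper's explicit argument makes transparent, and the chord-arc control is the ingredient you omit that actually makes the singular integrals bounded. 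Second, after eliminating $\pa_t z$ via the equation, the expression for $\pa_{tt}\pa_x z^1(0,0)$ consists of \emph{two-dimensional} integrals over $\mathbb{T}\times\mathbb{T}$ (41 of them, listed in Appendix \ref{appendixb}), not one-dimensional ones as you describe; this matters for the quadrature strategy, since the singular set is then a union of lines rather than a point. Neither issue is a logical gap, but both would need to be repaired in a complete write-up.
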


%\begin{lemma}
%\label{lemmauniformcontrol}
% Let $T$ be the minimum over $\varepsilon$ of the time of existence of the solution (both forwards and backwards) with initial data $z_{\ep}^{0}$ and let $|t| < \frac{T}{4}$. Then there exist universal constants $C_1, C_2, C_3$, such that
%\begin{align*}
% \pa_{t}\pa^{3}_{x}z_{\ep}^{1}(x,0) \leq C_1, \quad \pa_{t}^{2}\pa^{2}_{x}z_{\ep}^{1}(x,0) \leq C_2, \quad \pa_{t}^{3} \pa_{x} z_{\ep}^{1}(x,t) \leq C_3
%\end{align*}
%independently of $t$ and $\ep$.
%\end{lemma}

\begin{proof}
The proofs of 1.~and \eqref{2.1} are computer-assisted, and the codes can be found in the supplementary material. 

Let us start with 1. Since $\pa_{t} \pa_{\alx} z^{1}_{\ep}(0,0)$ (i.e., the spatial derivative of the first coordinate of the the right-hand side of \eqref{muskatinterfaceperiodic} at $(x,t)=(0,0)$) is a continuous function of $A(\ep)$, it suffices to show that the signs of $\pa_{t} \pa_{\alx} z^{1}_{\ep}(0,0)$ for $A(\ep) = 1.08050$ and for $A(\ep) = 1.08055$ are different for each $\ep\in[0,10^{-6}]$. This holds because for each such $\ep$ we obtain the  bounds
\begin{align} \label{2.4}
\pa_{t} \pa_{\alx} z^{1}_{\ep}(0,0) \in 0.000^{01}_{27}%[ 0.00001996941, 0.00026671564]
& \qquad \text{ for } A(\ep) = 1.08050,  \\%\quad 0 \leq \ep \leq 10^{-6} \\
\pa_{t} \pa_{\alx} z^{1}_{\ep}(0,0) \in  -0.000^{28}_{02}% [-0.00027111048,-0.00002436699]
& \qquad \text{ for } A(\ep) = 1.08055. \notag% \quad 0 \leq \ep \leq 10^{-6} \\
\end{align}

Existence and uniqueness of the solution $z_\ep$ in 2.~follows directly from the proof of Theorem 5.1 in \cite{Castro-Cordoba-Fefferman-Gancedo-LopezFernandez:rayleigh-taylor-breakdown}, which proves local well-posedness for   \eqref{muskatinterfaceperiodic} in the class of analytic functions of $x$.  The time $T>0$ 
%and the bound $C$ in \eqref{2.2} are 
is then uniform in all small $\ep$ (and $\sup_{|t|<T}\|\pa_x^k z_\ep(\cdot,t)\|_{L^\infty}$ is also uniformly bounded for each $k$) because the same is true for all the estimates in that proof.  

Then \eqref{2.1} follows by
%Since we don't know $A(\ep)$ explicitly, we take 
taking $A(\ep) = [1.08050,1.08055]$ (the full interval, since we do not know $A(\ep)$ explicitly) and propagating this interval in the relevant computations. Specifically, we obtain 
%the following rigorous bounds:
\begin{align*}
\pa_{tt} \pa_{\alx} z^{1}_{\ep}(0,0) \in [38.706,48.787]. %[3.87062435706E+001,4.87868943608E+001]
\end{align*}

%We will use that %if $|a| < M$, $b \in \mathbb{R}$, then there exists a constant $C_M > 0$ such that
%%\begin{align*}
%% \frac{1}{\cosh(a)-\cos(b)} \leq \frac{C_{M}}{a^{2} + b^{2}}.
%%\end{align*}
%%This implies that, 
%for every $|t| < T$, where $T$ is the minimum over $\varepsilon$ of the time of existence of the solution (both forwards and backwards) with initial data $z_{\ep}^{0}$:
The proof of Theorem 5.1 in \cite{Castro-Cordoba-Fefferman-Gancedo-LopezFernandez:rayleigh-taylor-breakdown} shows that  the chord-arc constant
\[ 
\sup_{x,y\in\mathbb T \,\&\, y\neq 0} \frac{|y|}{|z_\ep(x,t)-z_\ep(x-y,t)|}
\]
%from that proof 
(where $\mathbb T=[-\pi,\pi]$ with $\pm\pi$ identified)
is bounded uniformly in all $\ep,t$ under consideration (provided $T>0$ is small enough).  Thus there is $D<\infty$ such that
\begin{align*}
 \left|\frac{1}{\cosh(z_{\ep}^2(x,t) - z_{\ep}^2(x-y,t)) - \cos(z_{\ep}^1(x,t) - z_{\ep}^1(x-y,t))} \right| \leq \frac D{y^2}
 %\frac{ C_{CA,\ep}(t)}{y^{2}},
\end{align*}
for all these $\ep,t$ and all $x,y\in\mathbb T$.
% where$C_{CA,\ep}(t)$ is known as the chord-arc constant. 
%Let $\mathcal{E}_{\ep}(t)$ be the energy from the local existence in \cite{Castro-Cordoba-Fefferman-Gancedo-LopezFernandez:rayleigh-taylor-breakdown} with initial data  $z_{\ep}^{0}$. We remark that it controls the chord-arc constant and that for $|t| < \frac{T}{4}$ it is uniformly bounded in $\ep$ and $t$. 
This allows us to obtain \eqref{2.2} by brute force, differentiating and estimating all the resulting terms separately. The most singular term in $\pa_{t}\pa_{x}^3z_{\ep}^{1}(x,t)$ is given by
\begin{align*}
 \int_{\T} \frac{\sin(z_{\ep}^1(x,t) - z_{\ep}^1(x-y,t))(\pa_{x}^{4}z_{\ep}^{1}(x,t) - \pa^{4}_{x} z_{\ep}^{1}(x-y,t))}{\cosh(z_{\ep}^2(x,t) - z_{\ep}^2(x-y,t)) - \cos(z_{\ep}^1(x,t) - z_{\ep}^1(x-y,t))}dy \leq  2\pi D \| \pa_{x}^{5}z_{\ep}^{1}(\cdot,t)\|_{L^{\infty}} \leq C
 %C (1+\mathcal{E}_{\ep}(0))^{2}\| \pa_{x}^{5}z_{\ep}^{0,1}\|_{L^{\infty}} \leq C_1,
\end{align*}
for some $C$ which is uniform in $\ep$ due to the above-mentioned uniform bounds on $\|\pa_x^k z_\ep(\cdot,t)\|_{L^\infty}$.
 Analogously, the most singular term in $\pa_{t}^{2} \pa_{x}^{2} z_{\ep}^{1}(x,t)$ is given by
\begin{align*}
 \int_{\T} \frac{\sin(z_{\ep}^1(x,t) - z_{\ep}^1(x-y,0))(\pa_{t} \pa_{x}^{3}z_{\ep}^{1}(x,t) - \pa_{t} \pa_{x}^{3} z_{\ep}^{1}(x-y,t))}{\cosh(z_{\ep}^2(x,t) - z_{\ep}^2(x-y,t)) - \cos(z_{\ep}^1(x,t) - z_{\ep}^1(x-y,t))}dy 
 & \leq 2\pi D\| \pa_{t} \pa_{x}^{4}z_{\ep}^{1}(\cdot,t)\|_{L^{\infty}}, 
% & \leq C (1+\mathcal{E}_{\ep}(0))^{2}\| \pa_{t} \pa_{x}^{4}z_{\ep}^{0,1}\|_{L^{\infty}} \\
%& \leq C (1+\mathcal{E}_{\ep}(0))^{p}\| \pa_{x}^{6}z_{\ep}^{0,1}\|_{L^{\infty}} \\
%& \leq C_2,
\end{align*}
and the last term can be bounded by a uniform $C$ in the same way as $\pa_{t}\pa_{x}^3z_{\ep}^{1}(x,t)$.
 Finally, the most singular term in $\pa_{t}^{3}\pa_{x} z_\ep^{1}(x,t)$ is
\begin{align*}
 \int_{\T} \frac{\sin(z_{\ep}^1(x,t) - z_{\ep}^1(x-y,t))(\pa_{t}^{2} \pa_{x}^{2}z_{\ep}^{1}(x,t) - \pa_{t}^{2} \pa_{x}^{2} z_{\ep}^{1}(x-y,t))}{\cosh(z_{\ep}^2(x,t) - z_{\ep}^2(x-y,t)) - \cos(z_{\ep}^1(x,t) - z_{\ep}^1(x-y,t))}dy 
 & \leq 2\pi D\| \pa^{2}_{t} \pa_{x}^{3}z_{\ep}^{1}(\cdot,t)\|_{L^{\infty}}, 
% & \leq C (1+\mathcal{E}_{\ep}(t))^{2}\| \pa^{2}_{t} \pa_{x}^{3}z_{\ep}^{1}(t)\|_{L^{\infty}} \\
%& \leq C (1+\mathcal{E}_{\ep}(t))^{q}\| \pa_{x}^{7}z_{\ep}^{1}(t)\|_{L^{\infty}} \\
%& \leq C_3,
\end{align*}
which is bounded by a uniform $C$ in the same way as $\pa_{t}^{2} \pa_{x}^{2} z_{\ep}^{1}(x,t)$ (with the bound this time involving the uniformly bounded quantity $\| \pa_{x}^{7}z_{\ep}^{1}(\cdot,t)\|_{L^{\infty}}$).
\end{proof}

%\begin{proof}
%The proof is computer-assisted and the codes can be found in the supplementary material. 
%\begin{enumerate}
%\item Since $\pa_{t} \pa_{\alx} z^{1}_{\ep}(0,0)$ is a continuous function of $A(\ep)$, all we have to do is to check that the signs of $\pa_{t} \pa_{\alx} z^{1}_{\ep}(0,0)$ for $A(\ep) = 1.08050$ and for $A(\ep) = 1.08055$ are different for every $0 \leq \ep \leq 10^{-6}$. In particular, we obtain the following bounds:
%\begin{align*}
%\pa_{t} \pa_{\alx} z^{1}_{\ep}(0,0) \in 0.000^{01}_{27}%[ 0.00001996941, 0.00026671564]
%& \text{ for } A(\ep) = 1.08050, \quad 0 \leq \ep \leq 10^{-6} \\
%\pa_{t} \pa_{\alx} z^{1}_{\ep}(0,0) \in  -0.000^{28}_{02}% [-0.00027111048,-0.00002436699]
%& \text{ for } A(\ep) = 1.08055, \quad 0 \leq \ep \leq 10^{-6} \\
%\end{align*}
%\item Since we don't know $A(\ep)$ explicitly, we take $A(\ep) = [1.08050,1.08055]$ (the full interval) and propagate the interval in the computations. We are able to obtain the following rigorous bounds:
%\begin{align*}
%\pa_{tt} \pa_{\alx} z^{1}_{\ep}(0,0) \in [38.706,48.787], %[3.87062435706E+001,4.87868943608E+001]
%\end{align*}
%\end{enumerate}
%\end{proof}

\begin{proofthm}{theoremshifting}
Let $\ep,T,C$ be from the lemma (assume also $C\ge 1$), and note that we also have
\begin{align}
\label{derivadauniforme}
|\pa_{t} \pa_{x} z^{1}_{\ep}(x,t)| \le C
\end{align}
for $\ep\in[0,10^{-6}]$ and $(x,t)\in\mathbb T\times (-T,T)$, which is obtained  as the estimate on $\pa_{t}\pa_{x}^3z_{\ep}^{1}(x,t)$.  This means that for any small enough $\ep>0$ and any $|t|\le\sqrt\ep$ and $|x|\in [2C^{1/2}\ep^{1/4},\pi]$ we have
$\pa_{x} z^{1}_{\ep}(x,t)>0$.  

Next let $|t|\le\sqrt\ep$ and $|x|\le2C^{1/2}\ep^{1/4}$. Then there are $|x^{\sharp}|, |x^{\sharp \sharp}| \le |x|$ and $|t^{\sharp}| \le \sqrt \ep$ such that
\begin{align*}
\pa_{x} z^{1}_{\ep}(x,t)  = & \pa_{x} z^{1}_{\ep}(x,0) + t \pa_{t} \pa_{x} z^{1}_{\ep}(x,0) + \frac12 t^{2} \pa_{t}^{2} \pa_{x} z^{1}_{\ep}(x,0) + \frac{1}{6}t^{3} \pa_{t}^{3} \pa_{x} z^{1}_{\ep}(x,t^{\sharp}) \\
 = & -\ep\cos(x) + [1-\cos(x)]  + t \left[ \pa_{t} \pa_{x} z^{1}_{\ep}(0,0) + x\pa_{t} \pa_{x}^{2} z^{1}_{\ep}(0,0) + \frac12 x^{2}\pa_{t} \pa_{x}^{3} z^{1}_{\ep}(x^{\sharp},0) \right] \\
& + \frac12 t^{2} \left[ \pa_{t}^{2} \pa_{x} z^{1}_{\ep}(0,0) + x\pa_{t}^{2} \pa_{x}^{2} z^{1}_{\ep}(x^{\sharp \sharp},0) \right] + \frac{1}{6}t^{3} \pa_{t}^{3} \pa_{x} z^{1}_{\ep}(x,t^{\sharp}) \\
 \ge & -\ep  + x^2 \left(\frac 14-\frac C2|t| \right) + t^2 \left(15 -\frac C2|x| - \frac C6 |t| \right),
\end{align*}
where we  used the estimates from Lemma \ref{lemacomputer} and also  that $\pa_{t}\pa_{x}^2z^{1}_{\ep}(0,0) = 0$ by oddness of $z_\ep$.   This, together with $|t|= \sqrt\ep$ and $|x|\le 2C^{1/2}\ep^{1/4}$, shows that if $\ep>0$ is small enough, then $\pa_{x} z^{1}_{\ep}(x,t)>0$ for all $|t|\in[\frac 12\sqrt\ep,\sqrt\ep]$ and $|x|\le2C^{1/2}\ep^{1/4}$.  The theorem now follows with $z=z_\ep$, $T=\sqrt\ep$, and $\gamma=\frac \ep {2C}$, provided $\ep>0$ is  small enough (here we also used \eqref{derivadauniforme} and $\pa_{x} z^{1}_{\ep}(0,0)=-\ep$).
\end{proofthm}

We next show that our approach allows for the proof of existence of solutions which exhibit even more complicated stability shifting.  We will construct a solution with an unstable $\to$ stable $\to$ unstable $\to$ stable $\to$ unstable  stability regime profile.

We start by noticing that it suffices to consider solutions to \eqref{muskatinterface} with periodicity of the form $z(x+8N\pi,t)=z(x,t)+(8N\pi,0)$ for some integer $N\ge 1$, because then  $\tilde z(x,t)=\frac 1N z(Nx,Nt)$ also solves \eqref{muskatinterface} and  $\tilde z(x+2\pi,t)=\tilde z(x,t)+(2\pi,0)$.  Our initial data will be a perturbation of the $8N\pi$-periodic extension of the odd function
\[
z(x,0)=\bar z_{A(0)}(x)\chi_{[0,N\pi]}(|x|) + \bar z_{1.08055}(x)\chi_{(N\pi,3N\pi]}(|x|) + \bar z_{1.08050}(x)\chi_{(3N\pi,4N\pi]}(|x|),
\]
with $\bar z_B(x)=(x-\sin x,B\sin(2x))$ and $A(0)\in(1.08050, 1.08055)$ from Lemma \ref{lemacomputer}.  If $N$ is large, the estimates from the lemma and its proof show that at time $t=0$, the corresponding solution wants to make the shifts unstable $\to$ stable $\to$ unstable at $x=0$, stable $\to$ unstable at $|x|=2N\pi$, and stable $\to$ unstable at $|x|=4N\pi$.  An appropriate  perturbation of this initial data, which makes the unstable phase of the first shift last a positive length of time, delays the second shift, and brings the third shift forward in time would then achieve our goal.  We will also need this perturbation to resolve some other issues.  Specifically, the initial condition must be analytic so that we can solve the PDE both forward and  backward in time, and the solution must remain stable near $x=2n\pi$ for any integer $|n|\in(0,2N)\setminus\{N\}$ (note that the tangent to $z(x,0)$ is vertical at these points). 

For any large $N$ we therefore let
\[
B_{N,A}(x)= [A+(1.08055-A)\phi(|x|-N\pi)] \chi_{[0,3N\pi]}(|x|) + [1.08050+0.00005\phi(3N\pi+1-|x|)] \chi_{(3N\pi,4N\pi]}(|x|),
%\begin{cases}
%A
%\end{cases}
\]
with $A\in[1.08050, 1.08055]$ and $0\le\phi\le 1$ smooth such that $\phi(y)=0$ for $y\le 0$ and $\phi(y)=1$ for $y\ge 1$, and we extend $B_N$ to $\mathbb R$ periodically (with period $8N$).  Next we let $\delta_y$ be the delta function at $y\in\mathbb R$, and define the $8N\pi$-periodic odd functions
\[
\bar z_{N,A}(x)= (x-\sin x,B_{N,A}(x)\sin(2x)),
\]
and
\[
z_{N,A,r,a,c}(\cdot,0)=P_r* \bar z_{N,A} -  P_1*(a\beta_{N,0} + c \beta_{N,2N\pi} + c\beta_{N,-2N\pi}+ c \beta_{N,4N\pi},0),
\]
with $P_r(x)=\frac 1\pi \frac r{x^2+r^2}$ the Poisson kernel for the half-plane (note that $P_r*{\rm Id}_{\mathbb R}={\rm Id}_{\mathbb R}$) and $\beta_{N,y}(x)=\beta_N(x-y)$, where $\beta_N$ is the (unique and $8N\pi$-periodic) primitive of
\[
\sum_{n\in\mathbb Z} \delta_{8N\pi n} -\frac 1{8N\pi}
\]
which has $\int_{-4N\pi}^{4N\pi} \beta_{N}(x)dx=0$.  This and smoothness of $\phi$ means $z_{N,A,r,a,c}(\cdot,0)$ can be extended analytically to the strip $S_{r}=\mathbb R\times[-r,r]$ and this extension satisfies for each $k\ge 0$, 
\[
\sup_{N\ge 1 \,\&\, A\in[1.08050, 1.08055] \,\&\, r,a,c\in[0,1/2] \,\&\, |\zeta|\le r} \|\pa_x^k z_{N,A,r,a,c}(\cdot+i\zeta,0)\|_{L^\infty}<\infty.
\]
The proof of Theorem 5.1 in \cite{Castro-Cordoba-Fefferman-Gancedo-LopezFernandez:rayleigh-taylor-breakdown} then shows that for each $r>0$ there is $T_r$ (depending only on $r$) such that \eqref{muskatinterface} has a unique analytic solution $z_{N,A,r,a,c}$ on the time interval $(-T_r,T_r)$ with initial condition $z_{N,A,r,a,c}(x,0)$ (moreover, $\pa_t z_{N,A,r,a,c}$ is also analytic), and this satisfies for each $k\ge 0$, 
\[
\sup_{N\ge 1 \,\&\, A\in[1.08050, 1.08055] \,\&\, r,a,c\in[0,1/2] \,\&\, |t|<T_r } \left( \|\pa_x^k z_{N,A,r,a,c}(\cdot,t)\|_{L^\infty} + \|\pa_t\pa_x^k z_{N,A,r,a,c}(\cdot,t)\|_{L^\infty} \right)<\infty.
\]
(Below we always consider  $A\in[1.08050, 1.08055]$ and $r,a,c\in[0,\frac12]$.)  

This means that the bounds \eqref{2.1} and \eqref{derivadauniforme} extend to each $z_{N,A,r,a,c}$ and $(x,t)\in\mathbb R\times(-T_r,T_r)$ (where $T_0=0$),  with a uniform $C$.  We also have
\begin{equation} \label{2.5}
\pa_{t} \pa_{\alx} z^{1}_{N,A,r,a,c}(4N\pi,0) \ge 10^{-6} \qquad \text{and} \qquad  \pa_{t} \pa_{\alx} z^{1}_{N,A,r,a,c}(\pm 2N\pi,0) \le -10^{-6},
\end{equation}
as well as
\begin{equation} \label{2.6}
\pa_{t} \pa_{\alx} z^{1}_{N,1.08050,r,a,c}(0,0) \ge 10^{-6} \qquad \text{and} \qquad  \pa_{t} \pa_{\alx} z^{1}_{N,1.08055,r,a,c}(0,0) \le -10^{-6},
\end{equation}
both when $N^{-1}+r+a+c$ is small enough.  This follows from the bounds \eqref{2.4} and from
\begin{equation} \label{2.7}
\| \pa_x^k z_{N,A,r,a,c}(\cdot,0) - \pa_x^k \bar z_{N,A}\|_{L^\infty(I_N)}\to 0 \qquad\text{as $N^{-1}+r+a+c\to 0$}
\end{equation}
for each $k$, where $I_N=\bigcup_{n\in\mathbb Z} \left(2N\pi n-N,2N\pi n+N\right)$.  Similarly,  \eqref{2.2} and \eqref{2.7} also prove
\begin{equation} \label{2.8}
\pa_{tt} \pa_{\alx} z^{1}_{N,A,r,a,c}(0,0)\ge 20
\end{equation}
for small enough $N^{-1}+r+a+c$.  Fix now $N$ so that \eqref{2.5}, \eqref{2.6}, and \eqref{2.8} hold for all small enough $r+a+c$.

We next notice that for each $r>0$ we have $\pa_{\alx} z^{1}_{N,A,r,0,0}(x)=1-(P_r*\cos) (x)$, which is a  $2\pi$-periodic function with a positive minimum at $x=0$ (independent of $N,A$).  Thus there is a unique $a_{r}>0$ (small if $r>0$ is small, and also dependent on the fixed $N$) such that $\pa_{\alx} z^{1}_{N,A,r,a_{r},a_{r}} (\cdot,0)\ge 0$,
\[
\pa_{\alx} z^{1}_{N,A,r,a_{r},a_{r}}(2N\pi n,0)=0 
\]
for each $n\in\mathbb Z$, and  
\[
\pa_{\alx} z^{1}_{N,A,r,a_{r},a_{r}}(x,0)>0
\]
for $x\notin 2N\pi\mathbb Z$.  Finally, for any $\delta,\ep\in[0,a_r)$ let $A_{r,\delta,\ep}\in(1.08050, 1.08055)$ be such that
\begin{equation} \label{2.9}
\pa_{t} \pa_{\alx} z^{1}_{N,A_{r,\delta,\ep},r,a_{r}-\delta,a_{r}-\ep}(0,0)=0,
\end{equation}
which exists due to \eqref{2.6} and continuity of $\pa_{t} \pa_{\alx} z^{1}_{N,A,r,a_{r}-\delta,a_{r}-\ep}(0,0)$ in $A$.

For the sake of simplicity of notation, let us denote $z=z_{N,A_{r,\delta,\ep},r,a_{r}+\delta,a_{r}-\ep}$.
If now $r>0$ is small enough and $\delta,\ep>0$ are also small enough (the bound on them depends on $r$ and also on the constant $C$ from \eqref{2.1} and \eqref{derivadauniforme} for $z_{N,A,r,a,c}$), and such that
\[
0<-\pa_{\alx} z^{1}(0,0) \ll \left[ \frac 1C \min_{n\in\{-1,1,2\}}\pa_{\alx} z^{1}(2N\pi n,0) \right]^2
\]
(that is, $\ep>0$ is small enough and $\delta\approx \frac{3\ep}{8N}$; moreover, then all three values inside the $\min$ are $\approx \frac\ep\pi$), then $z$ is the desired solution.  Indeed, $\pa_x z^1(0,t)<0$ for all small enough $|t|$ and the argument from the proof of Theorem \ref{theoremshifting} shows that $\pa_x z^1(x,t)>0$ for all $x\in\mathbb R$ when
\[
 |\pa_{\alx} z^{1}(0,0)|^{1/2} \ll |t| \ll \frac\ep{C\pi}. %\min_{n\in\{-1,1,2\}}\pa_{\alx} z^{1}(2N\pi n,0)
\]
Finally, \eqref{2.5} and a uniform bound on $\pa_{t}^2 \pa_{\alx} z^{1}_{N,A,r,a,c}$ (obtained similarly to \eqref{2.2}) show that
\[
\pa_{\alx} z^{1}(4N\pi,-t) <0 \qquad \text{and} \qquad \pa_{\alx} z^{1}(\pm 2N\pi,t) <0
\]
for all $t\approx 10^6 \ep$ if $\ep\ll10^{-6}T_r$ is small enough (because $\pa_{\alx} z^{1}(4N\pi,0)\approx \frac\ep\pi\approx \pa_{\alx} z^{1}(\pm2N\pi,0$)).

We thus proved the following result.

\begin{theorem} \label{T.1.1}
There exist $T>T'>\gamma>0$  and a spatially analytic solution $z$ to \eqref{muskatinterfaceperiodic} on the time interval $[-T,T]$  such that $z(\cdot,t)$ is a graph of a smooth function of $x$ when $|t|\in[T'-\gamma,T'+\gamma]$
 but $z(\cdot,t)$ is not a graph of a function of $x$ when $|t|\in [0,\gamma]\cup[T-\gamma,T]$.
\end{theorem}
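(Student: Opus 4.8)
\begin{proofthm}{T.1.1}
The plan is to realize the regime profile unstable $\to$ stable $\to$ unstable $\to$ stable $\to$ unstable with a single spatially analytic solution built as a tightly controlled perturbation of a long-wavelength ``gluing'' of the curves from Lemma \ref{lemacomputer}.  First I would reduce to constructing an $8N\pi$-periodic solution of \eqref{muskatinterface}: if $z(x+8N\pi,t)=z(x,t)+(8N\pi,0)$ solves \eqref{muskatinterface}, then $\tilde z(x,t)=\frac1N z(Nx,Nt)$ solves it with the required $2\pi$-periodicity, and the set of $(x,t)$ at which the interface is a graph is merely rescaled.  The unperturbed datum is the odd $8N\pi$-periodic extension of
\[
\bar z_{A(0)}(x)\chi_{[0,N\pi]}(|x|)+\bar z_{1.08055}(x)\chi_{(N\pi,3N\pi]}(|x|)+\bar z_{1.08050}(x)\chi_{(3N\pi,4N\pi]}(|x|),
\]
with $\bar z_B(x)=(x-\sin x,B\sin(2x))$ and $A(0)$ from the lemma.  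For large $N$ this curve agrees near $x=0$, near $|x|=2N\pi$, and near $|x|=4N\pi$ with rescalings of $z_{A(0)},z_{1.08055},z_{1.08050}$, so by the interval bounds \eqref{2.4} and the locality estimate \eqref{2.7} the associated solution ``wants'' to perform the shifts unstable $\to$ stable $\to$ unstable at $x=0$, stable $\to$ unstable at $|x|=2N\pi$, and stable $\to$ unstable at $|x|=4N\pi$ (the block with amplitude $1.08055$ wanting to destabilize and the one with amplitude $1.08050$ to stabilize, the direction of the last shift being reversed by looking backward in time).

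The second step is to choose a perturbation meeting four requirements: (i) analyticity, with strip width uniform in $N$, so \eqref{muskatinterface} is solvable both forward and backward in time via Theorem 5.1 of \cite{Castro-Cordoba-Fefferman-Gancedo-LopezFernandez:rayleigh-taylor-breakdown}; (ii) the initial interface is a graph with $\pa_x z^1(\cdot,0)\ge0$ vanishing exactly on $2N\pi\Z$ (the unavoidable vertical-tangent points); (iii) independent fine-tuning of $\pa_x z^1(\cdot,0)$ and $\pa_t\pa_x z^1(\cdot,0)$ near each of $0,\pm2N\pi,4N\pi$; and (iv) the solution stays a graph near every $2n\pi$ with $|n|\in(0,2N)\setminus\{N\}$ for a definite length of time.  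For (i) I would convolve with the Poisson kernel $P_r$; for (ii)--(iii) I would subtract a $P_1$-smoothed combination $a\,\beta_{N,0}+c(\beta_{N,2N\pi}+\beta_{N,-2N\pi}+\beta_{N,4N\pi})$ of primitives of shifted delta combs, and interpolate the amplitude between $A$, $1.08055$, $1.08050$ on the three blocks using a smooth cutoff $\phi$; (iv) then holds because with $a=c=0$ one has $\pa_x z^1_{N,A,r,0,0}=1-P_r*\cos$, which has a positive minimum, and turning on small $a,c$ only dips it to zero at $2N\pi\Z$.  The well-posedness theorem on a fixed interval $(-T_r,T_r)$ gives bounds on $\|\pa_x^k z_{N,A,r,a,c}(\cdot,t)\|_{L^\infty}$ and $\|\pa_t\pa_x^k z_{N,A,r,a,c}(\cdot,t)\|_{L^\infty}$ uniform over the parameter family, which propagates \eqref{2.1} and \eqref{derivadauniforme} with a single constant $C$, and together with \eqref{2.4} and \eqref{2.7} yields \eqref{2.5}, \eqref{2.6}, \eqref{2.8} for all small $N^{-1}+r+a+c$; I would then fix such an $N$.

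The third step tunes $r,a,c,A$.  With $N$ fixed, take the unique small $a_r>0$ with $\pa_x z^1_{N,A,r,a_r,a_r}(\cdot,0)\ge0$ vanishing exactly on $2N\pi\Z$, and for small $\delta,\ep\ge0$ use the intermediate value theorem (via \eqref{2.6}) to pick $A_{r,\delta,\ep}\in(1.08050,1.08055)$ making the base solution critical at $x=0$ (its $\pa_t\pa_x z^1(0,0)=0$); set $z=z_{N,A_{r,\delta,\ep},r,a_r+\delta,a_r-\ep}$.  The $+\delta$ offset in the $\beta_{N,0}$-coefficient then makes $\pa_x z^1(0,0)$ a controlled small negative quantity while leaving $\pa_t\pa_x z^1(0,0)$ negligible, and the $-\ep$ offset in $c$ makes $\pa_x z^1(\pm2N\pi,0),\pa_x z^1(4N\pi,0)\approx\ep/\pi>0$.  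Choosing $\ep$ small with $\delta\approx\frac{3\ep}{8N}$ and $-\pa_x z^1(0,0)\ll[\frac1C\min_{n\in\{-1,1,2\}}\pa_x z^1(2N\pi n,0)]^2$, the curve is not a graph for $|t|$ near $0$ since $\pa_x z^1(0,0)<0$, while the third-order Taylor-in-$t$ argument from the proof of Theorem \ref{theoremshifting} --- now using $\pa_{tt}\pa_x z^1(0,0)\ge20$ from \eqref{2.8}, oddness (so $\pa_t\pa_x^2 z^1(0,0)=0$), and the uniform $C$ --- gives $\pa_x z^1(x,t)>0$ for all $x$ whenever $|\pa_x z^1(0,0)|^{1/2}\ll|t|\ll\frac\ep{C\pi}$, so the curve is a graph for $|t|\approx T'$.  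Finally, a second-order Taylor expansion in $t$ at $|x|=2N\pi$ and $|x|=4N\pi$, using \eqref{2.5} and a uniform bound on $\pa_{tt}\pa_x z^1_{N,A,r,a,c}$ obtained as in \eqref{2.2}, gives $\pa_x z^1(\pm2N\pi,t)<0$ and $\pa_x z^1(4N\pi,-t)<0$ for $|t|\approx10^6\ep=:T$ provided $\ep\ll10^{-6}T_r$, placing instabilities near $|t|=T$.  Reading off the three time windows yields $T>T'>\gamma>0$ as required.

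I expect the main obstacle to be step two: producing a single finite parameter vector $(r,a,c,A)$ that simultaneously keeps analyticity with a uniform strip width, keeps $\pa_x z^1(\cdot,0)\ge0$ with zeros exactly on the vertical-tangent lattice, realizes the sign conditions \eqref{2.5}--\eqref{2.6} at all four marked points, keeps every other $2n\pi$ stable for a definite length of time, and --- crucially --- leaves the computer-assisted bounds of Lemma \ref{lemacomputer} intact under perturbation via \eqref{2.7}.  Once these are in place, the time-separation bookkeeping ($|\pa_x z^1(0,0)|^{1/2}\ll|t|\ll\ep\ll10^{-6}T_r$, so that the first shift heals strictly before the vertical-tangent points turn over) is routine but must be tracked carefully, since the single $\gamma$ in the statement has to fit all three prescribed time windows at once.
\end{proofthm}
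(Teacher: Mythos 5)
Your proposal is correct and follows essentially the same route as the paper's proof: the same reduction to $8N\pi$-periodic data via the rescaling $\tilde z(x,t)=\frac1N z(Nx,Nt)$, the same glued initial datum smoothed by the Poisson kernel and corrected by the $\beta_N$ terms, the same parameter tuning ($a_r$, then $A_{r,\delta,\ep}$ by the intermediate value theorem, with $\delta\approx\frac{3\ep}{8N}$), and the same Taylor-in-time arguments producing the three stability windows. No substantive differences to report.
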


\section{Technical details of the numerical implementation}
\label{SectionTechnicalDetails}

In this section, we give some technical details of the implementation of the computer-assisted part of the proof of Lemma \ref{lemacomputer}. In order to perform the rigorous computations we used the C-XSC library \cite{CXSC}. We refer the reader to the appendices to see a detailed expression of the integral terms involved in the calculations. For the sake of readability, we kept the same names for the integrals in the paper and in the code. The code can be found in the supplementary material.

The implementation is split into several files, and many of the headers of the functions (such as the integration methods) contain pointers to functions (the integrands) so that they can be reused for an arbitrary number of integrals with minimal changes and easy and safe debugging. For the sake of clarity, and at the cost of numerical performance and duplicity in the code, we decided to treat many simple integrals instead of a single big one.
% for safer checking.

We start discussing the details of the first part of Lemma \ref{lemacomputer}, corresponding to the one dimensional integrals. The 3 integrals can be found in Appendix \ref{appendixa}. We split them into two parts: a nonsingular one over the interval $[\delta, \pi]$ and a singular one over the interval $[0,\delta]$. The nonsingular part is calculated using a Gauss-Legendre quadrature of order 2, given by
\begin{multline*}
\int_{a}^{b} f(\eta) d\eta \in  \frac{b-a}{2}\left(f\left(\frac{b-a}{2}\frac{\sqrt{3}}{3} + \frac{b+a}{2}\right)+f\left(-\frac{b-a}{2}\frac{\sqrt{3}}{3}+ \frac{b+a}{2}\right)\right)
+\frac{1}{4320}(b-a)^{5}f^{4}([a,b]).
\end{multline*}
Moreover, the integration is done in an adaptive way. For each region, we accepted or rejected the result depending on the width in an absolute and a relative way. It is important to notice that because of the uncertainty in $\ep$ and/or overestimation, division by zero might occur, even in small integration intervals. We used $\delta = 2^{-9}$ and tolerances \texttt{AbsTol} and \texttt{RelTol} equal to $10^{-6}$.

In the singular region, the singularity around $y = 0$ is integrable, hence the integral is finite. We performed a Taylor expansion around $y = 0$ in both the numerator and denominator (resp. of order 2,2 and 4 for $A_1$, $A_2$ and $A_3$), simplified the powers of $y$ and then integrated. Potentially this could fail because the uncertainty in the parameters or overestimation could yield a Taylor series in which $0$ belongs to the coefficient of the first non-simplified power of the denominator. Whenever this happens, we try to integrate instead using a Gauss-Legendre quadrature of order 2.

The maximum number of subdivision levels was 18 ($2^{18}$ intervals) for the bounded region and 12 ($2^{12}$ intervals) for the singular one. The splitting of the intervals is done in an arithmetic way, i.e, we split an integration interval $[a,b]$ into $\left[a,\frac{a+b}{2}\right]$ and $\left[\frac{a+b}{2},b\right]$.

In the second part of Lemma \ref{lemacomputer} we have to deal with 41 two dimensional integrals (see Appendix \ref{appendixb} for a detailed list of them and their derivation). The first step is to exploit the symmetry of the integrands in $(y,z)$ variables to integrate only over the region $[0,\pi] \times [-\pi,\pi]$. We will distinguish four different regions labeled in the following way: nonsingular $([\delta,\pi] \times [\delta,\pi]) \cup ([\delta,\pi] \times [-\pi,-\delta])$, singular-first-coordinate $([0,\delta] \times [\delta,\pi]) \cup ([0,\delta] \times [-\pi,-\delta]) $, singular-second-coordinate $[\delta,\pi] \times [-\delta,\delta] $ and singular-center $[0,\delta] \times [-\delta,\delta]$.

The nonsingular region was integrated as before, using a 2D Gauss-Legendre quadrature of order 2. The singular-center region was integrated in the following way. Assuming $\sign(a) = \sign(b), \sign(c) = \sign(d)$ and that we expand up to orders $num\_y, den\_y, num\_z, den\_z$:
\begin{align*}
 \int_{a}^{b} \int_{c}^{d} \frac{\text{Num}(y,z)}{\text{Den}(y,z)}dy dz
& \in \int_{a}^{b} \int_{c}^{d} \frac{\frac{1}{num\_y!num\_z!}\pa_{y}^{num\_y}\pa_{z}^{num\_z}\text{Num}(A,B)y^{num\_y}z^{num\_z}}{\frac{1}{den\_y!den\_z!}\pa_{y}^{den\_y}\pa_{z}^{den\_z}\text{Den}(A,B)y^{den\_y}z^{den\_z}}dy dz \\
& = \frac{1}{1+num\_y-den\_y}\frac{1}{1+num\_z-den\_z}\frac{den\_y!den\_z!}{num\_y!num\_z!} \\
& \left.\left.\times \frac{\pa_{y}^{num\_y}\pa_{z}^{num\_z}\text{Num}(A,B)}{\pa_{y}^{den\_y}\pa_{z}^{den\_z}\text{Den}(A,B)}y^{1+num\_y-den\_y}z^{1+num\_z-den\_z}\right|_{z=c}^{z=d}\right|_{y=a}^{y=b}
\end{align*}
where $A$ is the convex hull of $\{0, a, b\}$ and $B$ is the convex hull of $\{0, c, d\}$. For the singular-first-coordinate and singular-second-coordinate regions the same procedure was applied taking $num\_z = den\_z = 0, B = [c,d]$ and $num\_y = den\_y = 0, A = [a,b]$ respectively. A detailed list of the orders of each of the integrals can be found in the appendix in Table \ref{table_taylor}. Whenever the Taylor-based formulas failed because of 0 being enclosed in the denominator terms, we tried to integrate using 2D Gauss-Legendre of order 2.

In this two dimensional setting, we used a geometric splitting (in both coordinates) in the nonsingular region, arithmetic in the singular-center and singular-first-coordinate and hybrid in the singular-second-coordinate (see below). The geometric splitting consists in splitting by the geometric mean as opposed to the arithmetic one (i.e. assuming $a$ and $b$ have the same sign and are non-zero, we split $[a,b]$ into $[a,\sqrt{ab} \text{ sign}(a)]$ and $[\sqrt{ab}\text{ sign}(a),b]$). While the arithmetic division minimizes the length of the longest piece after the division, the geometric one minimizes the piece with the biggest ratio between its endpoints. This can be particularly useful in many cases: for example in order to avoid divisions by zero for integrands of the type $\frac{1}{y-A\sin(y)}$, which is a simplified version of some of the denominators that appear in all the terms. Detailed results of the breakdown by region and by term can be found in Table \ref{table_results}.

We chose $\delta = 2^{-5}$, and \texttt{AbsTol} and \texttt{RelTol} equal to $10^{-4}$. We changed the number of maximum subdivision levels depending on the region and (possibly) depending on the terms. For the nonsingular region, the maximum number was 10 ($2^{20}$ intervals). In the singular-first-coordinate, the maximum number of subdivisions was 8 ($2^{16}$ intervals), and that number was also used in the singular-center region. The singular-second-coordinate region was treated differently: all terms other than $B_{47}$ and $B_{55}$ were further split into 3 subregions: $[\delta,0.65] \times [-\delta,\delta]$, $[0.65,0.95] \times [-\delta,\delta]$ and $[0.95,\pi] \times [-\delta,\delta]$ and setting the maximum number of subdivisions to 9 in each subregion. The first and second subregion were computed using arithmetic splitting, whereas the third one was split geometrically only in the first coordinate, and arithmetically in the second.

 The singular-second-coordinate regions of the terms $B_{47}$ and $B_{55}$ are highly singular because of the cubic denominators and they required special precision. They were subdivided into 6 subregions: namely $[\delta,0.325] \times [-\delta,\delta]$, $[0.325,0.65] \times [-\delta,\delta] $, $[0.65,0.775] \times [-\delta,\delta]$, $[0.775,0.95] \times [-\delta,\delta]$, $[0.95,1.5] \times [-\delta,\delta] $ and $[1.5,\pi] \times [-\delta,\delta] $. The maximum number of subdivisions was 10 in each subregion. The last 2 subregions were split geometrically in the first coordinate, arithmetically in the second. The other 4 subregions were split arithmetically in each of the coordinates.

 The simulations were run on the NewComp cluster at Princeton University. Each of the programs was run on a core of 2 Xeon X5680 CPUs (6 cores each, 12 in total) at 3.33 GHz and 8 GB of RAM. The total runtime was about 3.5 min for the first part of Lemma \ref{lemacomputer}. For the second part, the different runtimes are summarized in table \ref{tableruntime}.

\section*{Acknowledgements}

DC and JGS were partially supported by the grant MTM2014-59488-P (Spain) and ICMAT Severo Ochoa project SEV-2011-0087. JGS was partially supported by an AMS-Simons Travel Grant. AZ acknowledges partial support by NSF grant DMS-1056327. We thank Princeton University for computing facilities (NewComp cluster).

\newpage
\appendix

\section{Integrals needed for the calculation of $\pa_{tx}z^{1}(0,0)$}

\label{appendixa}

We start with 
\begin{align*}
 \pa_{t}z^{1}(x,t) & = \int_{\T} \frac{\sin(z^1(x) - z^1(x-y))(z_{x}^{1}(x) - z_{x}^{1}(x-y))}{\cosh(z^2(x) - z^2(x-y)) - \cos(z^1(x) - z^1(x-y))}dy
\end{align*}

After taking a derivative in space:
\begin{align*}
 \pa_{tx}z^{1}(x,0) & = \int_{\T} \frac{\sin(z^1(x) - z^1(x-y))(z_{xx}^{1}(x) - z_{xx}^{1}(x-y))}{\cosh(z^2(x) - z^2(x-y)) - \cos(z^1(x) - z^1(x-y))}dy \\
& + \int_{\T} \frac{\cos(z^1(x) - z^1(x-y))(z_{x}^{1}(x) - z_{x}^{1}(x-y))^{2}}{\cosh(z^2(x) - z^2(x-y)) - \cos(z^1(x) - z^1(x-y))}dy \\
& - \int_{\T} \frac{\sin(z^1(x) - z^1(x-y))(z_{x}^{1}(x) - z_{x}^{1}(x-y))}{(\cosh(z^2(x) - z^2(x-y)) - \cos(z^1(x) - z^1(x-y)))^{2}} \\
& \times (\sinh(z^{2}(x) - z^{2}(x-y))(z_{x}^{2}(x) - z_{x}^{2}(x-y)) + \sin(z^{1}(x)-z^{1}(x-y))(z_{x}^{1}(x) - z_{x}^{1}(x-y)))dy \\
\end{align*}

Evaluating at $x = 0$ and exploiting the symmetry of the integral:
\begin{align*}
 \pa_{tx}z^{1}(0,0) & = \int_{\T} \frac{\sin(z^1(y))(z_{xx}^{1}(0) + z_{xx}^{1}(y))}{\cosh( z^2(y)) - \cos( z^1(y))}dy \\
& + \int_{\T} \frac{\cos(z^1(y))(z_{x}^{1}(0) - z_{x}^{1}(y))^{2}}{\cosh(z^2(y)) - \cos(z^1(y))}dy \\
& - \int_{\T} \frac{\sin(z^1(y))(z_{x}^{1}(0) - z_{x}^{1}(y))}{(\cosh(z^2(y)) - \cos(z^1(y)))^{2}}(\sinh(z^{2}(y))(z_{x}^{2}(0) - z_{x}^{2}(y)) + \sin(z^{1}(y))(z_{x}^{1}(0) - z_{x}^{1}(y)))dy \\
& = A_{1} + A_{2} + A_{3}
\end{align*}
\begin{align*}
 A_{1} & = 2\int_{0}^{\pi} \frac{\sin(z^1(y))(z_{xx}^{1}(y))}{\cosh( z^2(y)) - \cos( z^1(y))}dy \\
A_{2} & = 2\int_{0}^{\pi} \frac{\cos(z^1(y))(z_{x}^{1}(0) - z_{x}^{1}(y))^{2}}{\cosh(z^2(y)) - \cos(z^1(y))}dy \\
A_{3} & = -2 \int_{0}^{\pi} \frac{\sin(z^1(y))(z_{x}^{1}(0) - z_{x}^{1}(y))}{(\cosh(z^2(y)) - \cos(z^1(y)))^{2}}(\sinh(z^{2}(y))(z_{x}^{2}(0) - z_{x}^{2}(y)) + \sin(z^{1}(y))(z_{x}^{1}(0) - z_{x}^{1}(y)))dy
\end{align*}

\section{Integrals needed for the calculation of $\pa_{ttx}z^{1}(0,0)$}

\label{appendixb}

After taking a derivative in time:
\begin{align*}
\pa_{tt}z^{1}(x,t) & = \int_{\T} \frac{\cos(z^1(x) - z^1(x-y))(z_{t}^1(x) - z_{t}^1(x-y))(z_{x}^{1}(x) - z_{x}^{1}(x-y))}{\cosh(z^2(x) - z^2(x-y)) - \cos(z^1(x) - z^1(x-y))}dy \\
& + \int_{\T} \frac{\sin(z^1(x) - z^1(x-y))(z^{1}_{tx}(x) - z^{1}_{tx}(x-y))}{\cosh(z^2(x) - z^2(x-y)) - \cos(z^1(x) - z^1(x-y))}dy \\
& - \int_{\T} \frac{\sin(z^1(x) - z^1(x-y))(z^{1}_{x}(x) - z^{1}_{x}(x-y))\sinh(z^{2}(x) - z^{2}(x-y))(z_{t}^{2}(x) - z_{t}^{2}(x-y)}{(\cosh(z^2(x) - z^2(x-y)) - \cos(z^1(x) - z^1(x-y)))^{2}}dy \\
& - \int_{\T} \frac{\sin(z^1(x) - z^1(x-y))(z^{1}_{x}(x) - z^{1}_{x}(x-y))\sin(z^{1}(x)-z^{1}(x-y))(z_{t}^{1}(x) - z_{t}^{1}(x-y))}{(\cosh(z^2(x) - z^2(x-y)) - \cos(z^1(x) - z^1(x-y)))^{2}}dy \\
& = I_{1}(x) + I_{2}(x) + I_{3}(x) + I_{4}(x)
\end{align*}

We can further develop the terms of the second derivative:
\begin{multline*}
 I_{1}(x) = \int_{\T} \int_{\T} \frac{\cos(z^1(x) - z^1(x-y)))(z^{1}_{x}(x) - z^{1}_{x}(x-y))}{\cosh(z^2(x) - z^2(x-y)) - \cos(z^1(x) - z^1(x-y))} \\
 \times \left(\frac{\sin(z^1(x) - z^1(x-z))(z^{1}_{x}(x) - z^{1}_{x}(x-z))}{\cosh(z^2(x) - z^2(x-z)) - \cos(z^1(x) - z^1(x-z))}\right.
\\
\left. - 
\frac{\sin(z^1(x-y) - z^1(x-y-z))(z^{1}_{x}(x-y) - z^{1}_{x}(x-y-z))}{\cosh(z^2(x-y) - z^2(x-y-z)) - \cos(z^1(x-y) - z^1(x-y-z))}
\right)
dy dz 
\end{multline*}

\begin{align*}
 I_{2}(x) = I_{21}(x) + I_{22}(x) + I_{23}(x) + I_{24}(x),
\end{align*}
where
\begin{multline*}
 I_{21}(x) = \int_{\T} \int_{\T} \frac{\sin(z^1(x) - z^1(x-y)))}{\cosh(z^2(x) - z^2(x-y)) - \cos(z^1(x) - z^1(x-y))}\\
 \times \left(\frac{\cos(z^1(x) - z^1(x-z))(z^{1}_{x}(x) - z^{1}_{x}(x-z))(z_{x}^1(x) - z_{x}^1(x-z))}{\cosh(z^2(x) - z^2(x-z)) - \cos(z^1(x) - z^1(x-z))}\right.
\\
\left. - 
\frac{\cos(z^1(x-y) - z^1(x-y-z))(z^{1}_{x}(x-y) - z^{1}_{x}(x-y-z))(z_{x}^1(x-y) - z_{x}^1(x-y-z))}{\cosh(z^2(x-y) - z^2(x-y-z)) - \cos(z^1(x-y) - z^1(x-y-z))}
\right)
dy dz 
\end{multline*}

\begin{multline*}
 I_{22}(x) = \int_{\T} \int_{\T} \frac{\sin(z^1(x) - z^1(x-y)))}{\cosh(z^2(x) - z^2(x-y)) - \cos(z^1(x) - z^1(x-y))}\\
 \times \left(\frac{\sin(z^1(x) - z^1(x-z))(z^{1}_{xx}(x) - z^{1}_{xx}(x-z))}{\cosh(z^2(x) - z^2(x-z)) - \cos(z^1(x) - z^1(x-z))}\right.
\\
\left. - 
\frac{\sin(z^1(x-y) - z^1(x-y-z))(z^{1}_{xx}(x-y) - z^{1}_{xx}(x-y-z))}{\cosh(z^2(x-y) - z^2(x-y-z)) - \cos(z^1(x-y) - z^1(x-y-z))}
\right)
dy dz 
\end{multline*}

\begin{multline*}
 I_{23}(x) = -\int_{\T} \int_{\T} \frac{\sin(z^1(x) - z^1(x-y)))}{\cosh(z^2(x) - z^2(x-y)) - \cos(z^1(x) - z^1(x-y))}\\ 
\times \left(\frac{\sin(z^1(x) - z^1(x-z))(z^{1}_{x}(x) - z^{1}_{x}(x-z))\sinh(z^2(x) - z^2(x-z))(z^{2}_{x}(x) - z^{2}_{x}(x-z))}{(\cosh(z^2(x) - z^2(x-z)) - \cos(z^1(x) - z^1(x-z)))^{2}}\right.
\\
\left. - 
\frac{\sin(z^1(x-y) - z^1(x-y-z))(z^{1}_{x}(x-y) - z^{1}_{x}(x-y-z))\sinh(z^2(x-y) - z^2(x-y-z))(z^{2}_{x}(x-y) - z^{2}_{x}(x-y-z))}{(\cosh(z^2(x-y) - z^2(x-y-z)) - \cos(z^1(x-y) - z^1(x-y-z)))^2}
\right)
dy dz 
\end{multline*}

\begin{multline*}
 I_{24}(x) = -\int_{\T} \int_{\T} \frac{\sin(z^1(x) - z^1(x-y)))}{\cosh(z^2(x) - z^2(x-y)) - \cos(z^1(x) - z^1(x-y))}\\ 
\times \left(\frac{\sin(z^1(x) - z^1(x-z))(z^{1}_{x}(x) - z^{1}_{x}(x-z))\sin(z^1(x) - z^1(x-z))(z^{1}_{x}(x) - z^{1}_{x}(x-z))}{(\cosh(z^2(x) - z^2(x-z)) - \cos(z^1(x) - z^1(x-z)))^{2}}\right.
\\
\left. - 
\frac{\sin(z^1(x-y) - z^1(x-y-z))(z^{1}_{x}(x-y) - z^{1}_{x}(x-y-z))\sin(z^1(x-y) - z^1(x-y-z))(z^{1}_{x}(x-y) - z^{1}_{x}(x-y-z))}{(\cosh(z^2(x-y) - z^2(x-y-z)) - \cos(z^1(x-y) - z^1(x-y-z)))^2}
\right)
dy dz 
\end{multline*}

\begin{multline*}
 I_{3}(x)  = - \int_{\T} \int_{\T} \frac{\sin(z^1(x) - z^1(x-y))(z^{1}_{x}(x) - z^{1}_{x}(x-y))\sinh(z^{2}(x) - z^{2}(x-y))}{(\cosh(z^2(x) - z^2(x-y)) - \cos(z^1(x) - z^1(x-y)))^{2}} \\
 \times \left(\frac{\sin(z^1(x) - z^1(x-z))(z^{2}_{x}(x) - z^{2}_{x}(x-z))}{\cosh(z^2(x) - z^2(x-z)) - \cos(z^1(x) - z^1(x-z))}\right.
\\
\left. - 
\frac{\sin(z^1(x-y) - z^1(x-y-z))(z^{2}_{x}(x-y) - z^{2}_{x}(x-y-z))}{\cosh(z^2(x-y) - z^2(x-y-z)) - \cos(z^1(x-y) - z^1(x-y-z))}
\right)
dy dz 
\end{multline*}

\begin{multline*}
 I_{4}(x)  = - \int_{\T} \int_{\T} \frac{\sin(z^1(x) - z^1(x-y))(z^{1}_{x}(x) - z^{1}_{x}(x-y))\sin(z^{1}(x)-z^{1}(x-y))}{(\cosh(z^2(x) - z^2(x-y)) - \cos(z^1(x) - z^1(x-y)))^{2}} \\
 \times \left(\frac{\sin(z^1(x) - z^1(x-z))(z^{1}_{x}(x) - z^{1}_{x}(x-z))}{\cosh(z^2(x) - z^2(x-z)) - \cos(z^1(x) - z^1(x-z))}\right.
\\
\left. - 
\frac{\sin(z^1(x-y) - z^1(x-y-z))(z^{1}_{x}(x-y) - z^{1}_{x}(x-y-z))}{\cosh(z^2(x-y) - z^2(x-y-z)) - \cos(z^1(x-y) - z^1(x-y-z))}
\right)
dy dz 
\end{multline*}

We now compute $\pa_{x}$ of the integrals:
\begin{align*}
 \left.\pa_{x} I_{1}(x)\right|_{x=0} = \left.B_{11}(x) + B_{12}(x) + B_{13}(x) + B_{14}(x) + B_{15}(x) + B_{16}(x)\right|_{x=0}
\end{align*}
We have:
\begin{multline*}
 B_{11}(x) = -\int_{\T} \int_{\T} \frac{\sin(z^1(x) - z^1(x-y)))(z^{1}_{x}(x) - z^{1}_{x}(x-y))^{2}}{\cosh(z^2(x) - z^2(x-y)) - \cos(z^1(x) - z^1(x-y))} \\
 \times \left(\frac{\sin(z^1(x) - z^1(x-z))(z^{1}_{x}(x) - z^{1}_{x}(x-z))}{\cosh(z^2(x) - z^2(x-z)) - \cos(z^1(x) - z^1(x-z))}\right.
\\
\left. - 
\frac{\sin(z^1(x-y) - z^1(x-y-z))(z^{1}_{x}(x-y) - z^{1}_{x}(x-y-z))}{\cosh(z^2(x-y) - z^2(x-y-z)) - \cos(z^1(x-y) - z^1(x-y-z))}
\right)
dy dz 
\end{multline*}

\begin{multline*}
 B_{12}(x)  = \int_{\T} \int_{\T} \frac{\cos(z^1(x) - z^1(x-y)))(z^{1}_{xx}(x) - z^{1}_{xx}(x-y))}{\cosh(z^2(x) - z^2(x-y)) - \cos(z^1(x) - z^1(x-y))} \\
 \times \left(\frac{\sin(z^1(x) - z^1(x-z))(z^{1}_{x}(x) - z^{1}_{x}(x-z))}{\cosh(z^2(x) - z^2(x-z)) - \cos(z^1(x) - z^1(x-z))}\right.
\\
\left. - 
\frac{\sin(z^1(x-y) - z^1(x-y-z))(z^{1}_{x}(x-y) - z^{1}_{x}(x-y-z))}{\cosh(z^2(x-y) - z^2(x-y-z)) - \cos(z^1(x-y) - z^1(x-y-z))}
\right)
dy dz 
\end{multline*}

\begin{multline*}
 B_{13}(x) = -\int_{\T} \int_{\T} \frac{\cos(z^1(x) - z^1(x-y)))(z^{1}_{x}(x) - z^{1}_{x}(x-y))}{(\cosh(z^2(x) - z^2(x-y)) - \cos(z^1(x) - z^1(x-y)))^{2}} \\
\times \left(\sinh(z^{2}(x) - z^{2}(x-y))(z^{2}_{x}(x) - z^{2}_{x}(x-y)) + \sin(z^1(x) - z^1(x-y))(z^1_{x}(x) - z^1_{x}(x-y))\right) \\
 \times \left(\frac{\sin(z^1(x) - z^1(x-z))(z^{1}_{x}(x) - z^{1}_{x}(x-z))}{\cosh(z^2(x) - z^2(x-z)) - \cos(z^1(x) - z^1(x-z))}\right.
\\
\left. - 
\frac{\sin(z^1(x-y) - z^1(x-y-z))(z^{1}_{x}(x-y) - z^{1}_{x}(x-y-z))}{\cosh(z^2(x-y) - z^2(x-y-z)) - \cos(z^1(x-y) - z^1(x-y-z))}
\right)
dy dz 
\end{multline*}

\begin{multline*}
 B_{14}(x) = \int_{\T} \int_{\T} \frac{\cos(z^1(x) - z^1(x-y)))(z^{1}_{x}(x) - z^{1}_{x}(x-y))}{\cosh(z^2(x) - z^2(x-y)) - \cos(z^1(x) - z^1(x-y))} \\
 \times \left(\frac{\cos(z^1(x) - z^1(x-z))(z^{1}_{x}(x) - z^{1}_{x}(x-z))^{2}}{\cosh(z^2(x) - z^2(x-z)) - \cos(z^1(x) - z^1(x-z))}\right.
\\
\left. - 
\frac{\cos(z^1(x-y) - z^1(x-y-z))(z^{1}_{x}(x-y) - z^{1}_{x}(x-y-z))^{2}}{\cosh(z^2(x-y) - z^2(x-y-z)) - \cos(z^1(x-y) - z^1(x-y-z))}
\right)
dy dz 
\end{multline*}

\begin{multline*}
 B_{15}(x) = \int_{\T} \int_{\T} \frac{\cos(z^1(x) - z^1(x-y)))(z^{1}_{x}(x) - z^{1}_{x}(x-y))}{\cosh(z^2(x) - z^2(x-y)) - \cos(z^1(x) - z^1(x-y))} \\
 \times \left(\frac{\sin(z^1(x) - z^1(x-z))(z^{1}_{xx}(x) - z^{1}_{xx}(x-z))}{\cosh(z^2(x) - z^2(x-z)) - \cos(z^1(x) - z^1(x-z))}\right.
\\
\left. - 
\frac{\sin(z^1(x-y) - z^1(x-y-z))(z^{1}_{xx}(x-y) - z^{1}_{xx}(x-y-z))}{\cosh(z^2(x-y) - z^2(x-y-z)) - \cos(z^1(x-y) - z^1(x-y-z))}
\right)
dy dz 
\end{multline*}

\begin{multline*}
 B_{16}(x) = -\int_{\T} \int_{\T} \frac{\cos(z^1(x) - z^1(x-y)))(z^{1}_{x}(x) - z^{1}_{x}(x-y))}{\cosh(z^2(x) - z^2(x-y)) - \cos(z^1(x) - z^1(x-y))} \\
 \times \left(\frac{\sin(z^1(x) - z^1(x-z))(z^{1}_{x}(x) - z^{1}_{x}(x-z))}{(\cosh(z^2(x) - z^2(x-z)) - \cos(z^1(x) - z^1(x-z)))^{2}} \right.\\
\times(\sinh(z^{2}(x) - z^{2}(x-z))(z^{2}_{x}(x)-z^{2}_{x}(x-z)) + \sin(z^{1}(x) - z^{1}(x-z))(z^{1}_{x}(x)-z_{x}^{1}(x-z)))
\\ - 
\frac{\sin(z^1(x-y) - z^1(x-y-z))(z^{1}_{x}(x-y) - z^{1}_{x}(x-y-z))}{(\cosh(z^2(x-y) - z^2(x-y-z)) - \cos(z^1(x-y) - z^1(x-y-z)))^{2}} \\
\left.\times
(\sinh(z^{2}(x-y) - z^{2}(x-y-z))(z^{2}_{x}(x-y)-z^{2}_{x}(x-y-z)) + \sin(z^{1}(x-y) - z^{1}(x-y-z))(z^{1}_{x}(x-y)-z_{x}^{1}(x-y-z)))\right)
dy dz 
\end{multline*}

We move on to $I_{21}(x)$. Taking a derivative yields:
\begin{align*}
 \pa_{x} I_{21}(x)  = B_{21}(x) + B_{22}(x) + B_{23}(x) + B_{24}(x) + B_{25}(x),
\end{align*}
where
\begin{multline*}
 B_{21}(x) = \int_{\T} \int_{\T} \frac{\cos(z^1(x) - z^1(x-y))(z^1_{x}(x) - z^1_{x}(x-y))}{\cosh(z^2(x) - z^2(x-y)) - \cos(z^1(x) - z^1(x-y))}\\
 \times \left(\frac{\cos(z^1(x) - z^1(x-z))(z^{1}_{x}(x) - z^{1}_{x}(x-z))^{2}}{\cosh(z^2(x) - z^2(x-z)) - \cos(z^1(x) - z^1(x-z))}\right.
\\
\left. - 
\frac{\cos(z^1(x-y) - z^1(x-y-z))(z^{1}_{x}(x-y) - z^{1}_{x}(x-y-z))^{2}}{\cosh(z^2(x-y) - z^2(x-y-z)) - \cos(z^1(x-y) - z^1(x-y-z))}
\right)
dy dz 
\end{multline*}

\begin{multline*}
 B_{22}(x) = -\int_{\T} \int_{\T} \frac{\sin(z^1(x) - z^1(x-y))}{(\cosh(z^2(x) - z^2(x-y)) - \cos(z^1(x) - z^1(x-y)))^{2}}\\
\times \left(\sinh(z^{2}(x) - z^{2}(x-y))(z^{2}_{x}(x) - z^{2}_{x}(x-y)) + \sin(z^1(x) - z^1(x-y))(z^1_{x}(x) - z^1_{x}(x-y))\right) \\
 \times \left(\frac{\cos(z^1(x) - z^1(x-z))(z^{1}_{x}(x) - z^{1}_{x}(x-z))^{2}}{\cosh(z^2(x) - z^2(x-z)) - \cos(z^1(x) - z^1(x-z))}\right.
\\
\left. - 
\frac{\cos(z^1(x-y) - z^1(x-y-z))(z^{1}_{x}(x-y) - z^{1}_{x}(x-y-z))^{2}}{\cosh(z^2(x-y) - z^2(x-y-z)) - \cos(z^1(x-y) - z^1(x-y-z))}
\right)
dy dz 
\end{multline*}

\begin{multline*}
 B_{23}(x) = -\int_{\T} \int_{\T} \frac{\sin(z^1(x) - z^1(x-y)))}{\cosh(z^2(x) - z^2(x-y)) - \cos(z^1(x) - z^1(x-y))}\\
 \times \left(\frac{\sin(z^1(x) - z^1(x-z))(z^{1}_{x}(x) - z^{1}_{x}(x-z))^{3}}{\cosh(z^2(x) - z^2(x-z)) - \cos(z^1(x) - z^1(x-z))}\right.
\\
\left. - 
\frac{\sin(z^1(x-y) - z^1(x-y-z))(z^{1}_{x}(x-y) - z^{1}_{x}(x-y-z))^{3}}{\cosh(z^2(x-y) - z^2(x-y-z)) - \cos(z^1(x-y) - z^1(x-y-z))}
\right)
dy dz 
\end{multline*}

\begin{multline*}
 B_{24}(x) = 2\int_{\T} \int_{\T} \frac{\sin(z^1(x) - z^1(x-y)))}{\cosh(z^2(x) - z^2(x-y)) - \cos(z^1(x) - z^1(x-y))}\\
 \times \left(\frac{\cos(z^1(x) - z^1(x-z))(z^{1}_{xx}(x) - z^{1}_{xx}(x-z))(z_{x}^1(x) - z_{x}^1(x-z))}{\cosh(z^2(x) - z^2(x-z)) - \cos(z^1(x) - z^1(x-z))}\right.
\\
\left. - 
\frac{\cos(z^1(x-y) - z^1(x-y-z))(z^{1}_{xx}(x-y) - z^{1}_{xx}(x-y-z))(z_{x}^1(x-y) - z_{x}^1(x-y-z))}{\cosh(z^2(x-y) - z^2(x-y-z)) - \cos(z^1(x-y) - z^1(x-y-z))}
\right)
dy dz 
\end{multline*}

\begin{multline*}
 B_{25}(x) = -\int_{\T} \int_{\T} \frac{\sin(z^1(x) - z^1(x-y)))}{\cosh(z^2(x) - z^2(x-y)) - \cos(z^1(x) - z^1(x-y))}\\
 \times \left(\frac{\cos(z^1(x) - z^1(x-z))(z^{1}_{x}(x) - z^{1}_{x}(x-z))^{2}}{(\cosh(z^2(x) - z^2(x-z)) - \cos(z^1(x) - z^1(x-z)))^{2}}\right.
\\
\times(\sinh(z^{2}(x) - z^{2}(x-z))(z^{2}_{x}(x)-z^{2}_{x}(x-z)) + \sin(z^{1}(x) - z^{1}(x-z))(z^{1}_{x}(x)-z_{x}^{1}(x-z)))
\\ 
 - 
\frac{\cos(z^1(x-y) - z^1(x-y-z))(z^{1}_{x}(x-y) - z^{1}_{x}(x-y-z))^{2}}{(\cosh(z^2(x-y) - z^2(x-y-z)) - \cos(z^1(x-y) - z^1(x-y-z)))^{2}} \\
\left.\times
(\sinh(z^{2}(x-y) - z^{2}(x-y-z))(z^{2}_{x}(x-y)-z^{2}_{x}(x-y-z)) + \sin(z^{1}(x-y) - z^{1}(x-y-z))(z^{1}_{x}(x-y)-z_{x}^{1}(x-y-z)))\right)
dy dz 
\end{multline*}

Next we differentiate $I_{22}(x)$:
\begin{align*}
 \pa_{x} I_{22}(x) = B_{31}(x) + B_{32}(x) + B_{33}(x) + B_{34}(x) + B_{35}(x),
\end{align*}
where
\begin{multline*}
 B_{31}(x) = \int_{\T} \int_{\T} \frac{\cos(z^1(x) - z^1(x-y))(z^1_{x}(x) - z^1_{x}(x-y))}{\cosh(z^2(x) - z^2(x-y)) - \cos(z^1(x) - z^1(x-y))}\\
 \times \left(\frac{\sin(z^1(x) - z^1(x-z))(z^{1}_{xx}(x) - z^{1}_{xx}(x-z))}{\cosh(z^2(x) - z^2(x-z)) - \cos(z^1(x) - z^1(x-z))}\right.
\\
\left. - 
\frac{\sin(z^1(x-y) - z^1(x-y-z))(z^{1}_{xx}(x-y) - z^{1}_{xx}(x-y-z))}{\cosh(z^2(x-y) - z^2(x-y-z)) - \cos(z^1(x-y) - z^1(x-y-z))}
\right)
dy dz 
\end{multline*}

\begin{multline*}
 B_{32}(x) = -\int_{\T} \int_{\T} \frac{\sin(z^1(x) - z^1(x-y)))}{(\cosh(z^2(x) - z^2(x-y)) - \cos(z^1(x) - z^1(x-y)))^{2}}\\
\times \left(\sinh(z^{2}(x) - z^{2}(x-y))(z^{2}_{x}(x) - z^{2}_{x}(x-y)) + \sin(z^1(x) - z^1(x-y))(z^{1}_{x}(x) - z^{1}_{x}(x-y))\right) \\
 \times \left(\frac{\sin(z^1(x) - z^1(x-z))(z^{1}_{xx}(x) - z^{1}_{xx}(x-z))}{\cosh(z^2(x) - z^2(x-z)) - \cos(z^1(x) - z^1(x-z))}\right.
\\
\left. - 
\frac{\sin(z^1(x-y) - z^1(x-y-z))(z^{1}_{xx}(x-y) - z^{1}_{xx}(x-y-z))}{\cosh(z^2(x-y) - z^2(x-y-z)) - \cos(z^1(x-y) - z^1(x-y-z))}
\right)
dy dz 
\end{multline*}

\begin{multline*}
 B_{33}(x) = \int_{\T} \int_{\T} \frac{\sin(z^1(x) - z^1(x-y)))}{\cosh(z^2(x) - z^2(x-y)) - \cos(z^1(x) - z^1(x-y))}\\
 \times \left(\frac{\cos(z^1(x) - z^1(x-z))(z^{1}_{xx}(x) - z^{1}_{xx}(x-z))(z^1_{x}(x) - z^1_{x}(x-z))}{\cosh(z^2(x) - z^2(x-z)) - \cos(z^1(x) - z^1(x-z))}\right.
\\
\left. - 
\frac{\cos(z^1(x-y) - z^1(x-y-z))(z^{1}_{xx}(x-y) - z^{1}_{xx}(x-y-z))(z^1_{x}(x-y) - z^1_{x}(x-y-z))}{\cosh(z^2(x-y) - z^2(x-y-z)) - \cos(z^1(x-y) - z^1(x-y-z))}
\right)
dy dz 
\end{multline*}

\begin{multline*}
 B_{34}(x) = \int_{\T} \int_{\T} \frac{\sin(z^1(x) - z^1(x-y)))}{\cosh(z^2(x) - z^2(x-y)) - \cos(z^1(x) - z^1(x-y))}\\
 \times \left(\frac{\sin(z^1(x) - z^1(x-z))(z^{1}_{xxx}(x) - z^{1}_{xxx}(x-z))}{\cosh(z^2(x) - z^2(x-z)) - \cos(z^1(x) - z^1(x-z))}\right.
\\
\left. - 
\frac{\sin(z^1(x-y) - z^1(x-y-z))(z^{1}_{xxx}(x-y) - z^{1}_{xxx}(x-y-z))}{\cosh(z^2(x-y) - z^2(x-y-z)) - \cos(z^1(x-y) - z^1(x-y-z))}
\right)
dy dz 
\end{multline*}

\begin{multline*}
 B_{35}(x) = -\int_{\T} \int_{\T} \frac{\sin(z^1(x) - z^1(x-y)))}{\cosh(z^2(x) - z^2(x-y)) - \cos(z^1(x) - z^1(x-y))}\\
 \times \left(\frac{\sin(z^1(x) - z^1(x-z))(z^{1}_{xx}(x) - z^{1}_{xx}(x-z))}{(\cosh(z^2(x) - z^2(x-z)) - \cos(z^1(x) - z^1(x-z)))^{2}}\right.
\\
\times(\sinh(z^{2}(x) - z^{2}(x-z))(z^{2}_{x}(x)-z^{2}_{x}(x-z)) + \sin(z^{1}(x) - z^{1}(x-z))(z^{1}_{x}(x)-z_{x}^{1}(x-z)))
\\  - 
\frac{\sin(z^1(x-y) - z^1(x-y-z))(z^{1}_{xx}(x-y) - z^{1}_{xx}(x-y-z))}{(\cosh(z^2(x-y) - z^2(x-y-z)) - \cos(z^1(x-y) - z^1(x-y-z)))^{2}} \\
\left.\times
(\sinh(z^{2}(x-y) - z^{2}(x-y-z))(z^{2}_{x}(x-y)-z^{2}_{x}(x-y-z)) + \sin(z^{1}(x-y) - z^{1}(x-y-z))(z^{1}_{x}(x-y)-z_{x}^{1}(x-y-z)))\right)
dy dz 
\end{multline*}

The differentiation of $I_{23}(x)$ follows:
\begin{align*}
\pa_{x} I_{23}(x) = B_{41}(x) + B_{42}(x) + B_{43}(x) + B_{44}(x) + B_{45}(x) + B_{46}(x) + B_{47}(x),
\end{align*}
where
\begin{multline*}
 B_{41}(x) = -\int_{\T} \int_{\T} \frac{\cos(z^1(x) - z^1(x-y))(z^1_{x}(x) - z^1_{x}(x-y))}{\cosh(z^2(x) - z^2(x-y)) - \cos(z^1(x) - z^1(x-y))}\\ 
\times \left(\frac{\sin(z^1(x) - z^1(x-z))(z^{1}_{x}(x) - z^{1}_{x}(x-z))\sinh(z^2(x) - z^2(x-z))(z^{2}_{x}(x) - z^{2}_{x}(x-z))}{(\cosh(z^2(x) - z^2(x-z)) - \cos(z^1(x) - z^1(x-z)))^{2}}\right.
\\
\left. - 
\frac{\sin(z^1(x-y) - z^1(x-y-z))(z^{1}_{x}(x-y) - z^{1}_{x}(x-y-z))\sinh(z^2(x-y) - z^2(x-y-z))(z^{2}_{x}(x-y) - z^{2}_{x}(x-y-z))}{(\cosh(z^2(x-y) - z^2(x-y-z)) - \cos(z^1(x-y) - z^1(x-y-z)))^2}
\right)
dy dz 
\end{multline*}

\begin{multline*}
 B_{42}(x) = \int_{\T} \int_{\T} \frac{\sin(z^1(x) - z^1(x-y)))}{(\cosh(z^2(x) - z^2(x-y)) - \cos(z^1(x) - z^1(x-y)))^{2}}\\ 
\times \left(\sinh(z^{2}(x) - z^{2}(x-y))(z^{2}_{x}(x) - z^{2}_{x}(x-y)) + \sin(z^1(x) - z^1(x-y))(z^{1}_{x}(x) - z^{1}_{x}(x-y))\right) \\
\times \left(\frac{\sin(z^1(x) - z^1(x-z))(z^{1}_{x}(x) - z^{1}_{x}(x-z))\sinh(z^2(x) - z^2(x-z))(z^{2}_{x}(x) - z^{2}_{x}(x-z))}{(\cosh(z^2(x) - z^2(x-z)) - \cos(z^1(x) - z^1(x-z)))^{2}}\right.
\\
\left. - 
\frac{\sin(z^1(x-y) - z^1(x-y-z))(z^{1}_{x}(x-y) - z^{1}_{x}(x-y-z))\sinh(z^2(x-y) - z^2(x-y-z))(z^{2}_{x}(x-y) - z^{2}_{x}(x-y-z))}{(\cosh(z^2(x-y) - z^2(x-y-z)) - \cos(z^1(x-y) - z^1(x-y-z)))^2}
\right)
dy dz 
\end{multline*}

\begin{multline*}
 B_{43}(x) = -\int_{\T} \int_{\T} \frac{\sin(z^1(x) - z^1(x-y)))}{\cosh(z^2(x) - z^2(x-y)) - \cos(z^1(x) - z^1(x-y))}\\ 
\times \left(\frac{\cos(z^1(x) - z^1(x-z))(z^{1}_{x}(x) - z^{1}_{x}(x-z))^{2}\sinh(z^2(x) - z^2(x-z))(z^{2}_{x}(x) - z^{2}_{x}(x-z))}{(\cosh(z^2(x) - z^2(x-z)) - \cos(z^1(x) - z^1(x-z)))^{2}}\right.
\\
\left. - 
\frac{\cos(z^1(x-y) - z^1(x-y-z))(z^{1}_{x}(x-y) - z^{1}_{x}(x-y-z))^{2}\sinh(z^2(x-y) - z^2(x-y-z))(z^{2}_{x}(x-y) - z^{2}_{x}(x-y-z))}{(\cosh(z^2(x-y) - z^2(x-y-z)) - \cos(z^1(x-y) - z^1(x-y-z)))^2}
\right)
dy dz 
\end{multline*}

\begin{multline*}
 B_{44}(x) = -\int_{\T} \int_{\T} \frac{\sin(z^1(x) - z^1(x-y)))}{\cosh(z^2(x) - z^2(x-y)) - \cos(z^1(x) - z^1(x-y))}\\ 
\times \left(\frac{\sin(z^1(x) - z^1(x-z))(z^{1}_{xx}(x) - z^{1}_{xx}(x-z))\sinh(z^2(x) - z^2(x-z))(z^{2}_{x}(x) - z^{2}_{x}(x-z))}{(\cosh(z^2(x) - z^2(x-z)) - \cos(z^1(x) - z^1(x-z)))^{2}}\right.
\\
\left. - 
\frac{\sin(z^1(x-y) - z^1(x-y-z))(z^{1}_{xx}(x-y) - z^{1}_{xx}(x-y-z))\sinh(z^2(x-y) - z^2(x-y-z))(z^{2}_{x}(x-y) - z^{2}_{x}(x-y-z))}{(\cosh(z^2(x-y) - z^2(x-y-z)) - \cos(z^1(x-y) - z^1(x-y-z)))^2}
\right)
dy dz 
\end{multline*}

\begin{multline*}
 B_{45}(x) = -\int_{\T} \int_{\T} \frac{\sin(z^1(x) - z^1(x-y)))}{\cosh(z^2(x) - z^2(x-y)) - \cos(z^1(x) - z^1(x-y))}\\ 
\times \left(\frac{\sin(z^1(x) - z^1(x-z))(z^{1}_{x}(x) - z^{1}_{x}(x-z))\cosh(z^2(x) - z^2(x-z))(z^{2}_{x}(x) - z^{2}_{x}(x-z))^{2}}{(\cosh(z^2(x) - z^2(x-z)) - \cos(z^1(x) - z^1(x-z)))^{2}}\right.
\\
\left. - 
\frac{\sin(z^1(x-y) - z^1(x-y-z))(z^{1}_{x}(x-y) - z^{1}_{x}(x-y-z))\cosh(z^2(x-y) - z^2(x-y-z))(z^{2}_{x}(x-y) - z^{2}_{x}(x-y-z))^{2}}{(\cosh(z^2(x-y) - z^2(x-y-z)) - \cos(z^1(x-y) - z^1(x-y-z)))^2}
\right)
dy dz 
\end{multline*}

\begin{multline*}
 B_{46}(x) = -\int_{\T} \int_{\T} \frac{\sin(z^1(x) - z^1(x-y)))}{\cosh(z^2(x) - z^2(x-y)) - \cos(z^1(x) - z^1(x-y))}\\ 
\times \left(\frac{\sin(z^1(x) - z^1(x-z))(z^{1}_{x}(x) - z^{1}_{x}(x-z))\sinh(z^2(x) - z^2(x-z))(z^{2}_{xx}(x) - z^{2}_{xx}(x-z))}{(\cosh(z^2(x) - z^2(x-z)) - \cos(z^1(x) - z^1(x-z)))^{2}}\right.
\\
\left. - 
\frac{\sin(z^1(x-y) - z^1(x-y-z))(z^{1}_{x}(x-y) - z^{1}_{x}(x-y-z))\sinh(z^2(x-y) - z^2(x-y-z))(z^{2}_{xx}(x-y) - z^{2}_{xx}(x-y-z))}{(\cosh(z^2(x-y) - z^2(x-y-z)) - \cos(z^1(x-y) - z^1(x-y-z)))^2}
\right)
dy dz 
\end{multline*}

\begin{multline*}
 B_{47}(x) = 2\int_{\T} \int_{\T} \frac{\sin(z^1(x) - z^1(x-y)))}{\cosh(z^2(x) - z^2(x-y)) - \cos(z^1(x) - z^1(x-y))}\\ 
\times \left(\frac{\sin(z^1(x) - z^1(x-z))(z^{1}_{x}(x) - z^{1}_{x}(x-z))\sinh(z^2(x) - z^2(x-z))(z^{2}_{x}(x) - z^{2}_{x}(x-z))}{(\cosh(z^2(x) - z^2(x-z)) - \cos(z^1(x) - z^1(x-z)))^{3}}\right.
\\
\times(\sinh(z^{2}(x) - z^{2}(x-z))(z^{2}_{x}(x)-z^{2}_{x}(x-z)) + \sin(z^{1}(x) - z^{1}(x-z))(z^{1}_{x}(x)-z_{x}^{1}(x-z)))
\\
- 
\frac{\sin(z^1(x-y) - z^1(x-y-z))(z^{1}_{x}(x-y) - z^{1}_{x}(x-y-z))\sinh(z^2(x-y) - z^2(x-y-z))(z^{2}_{x}(x-y) - z^{2}_{x}(x-y-z))}{(\cosh(z^2(x-y) - z^2(x-y-z)) - \cos(z^1(x-y) - z^1(x-y-z)))^{3}} \\
\left.\times
(\sinh(z^{2}(x-y) - z^{2}(x-y-z))(z^{2}_{x}(x-y)-z^{2}_{x}(x-y-z)) + \sin(z^{1}(x-y) - z^{1}(x-y-z))(z^{1}_{x}(x-y)-z_{x}^{1}(x-y-z)))\right)
dy dz 
\end{multline*}

We keep on differentiating, this time $I_{24}(x)$:
\begin{align*}
 \pa_{x} I_{24}(x) & = B_{51}(x) + B_{52}(x) + B_{53}(x) + B_{54}(x) + B_{55}(x),
\end{align*}
which have the following expressions:
\begin{multline*}
 B_{51}(x) = -\int_{\T} \int_{\T} \frac{\cos(z^1(x) - z^1(x-y))(z^1_{x}(x) - z^1_{x}(x-y))}{\cosh(z^2(x) - z^2(x-y)) - \cos(z^1(x) - z^1(x-y))}\\ 
\times \left(\frac{(\sin(z^1(x) - z^1(x-z)))^{2}(z^{1}_{x}(x) - z^{1}_{x}(x-z))^{2}}{(\cosh(z^2(x) - z^2(x-z)) - \cos(z^1(x) - z^1(x-z)))^{2}}\right.
\\
\left. - 
\frac{(\sin(z^1(x-y) - z^1(x-y-z)))^{2}(z^{1}_{x}(x-y) - z^{1}_{x}(x-y-z))^{2}}{(\cosh(z^2(x-y) - z^2(x-y-z)) - \cos(z^1(x-y) - z^1(x-y-z)))^2}
\right)
dy dz 
\end{multline*}

\begin{multline*}
 B_{52}(x) = \int_{\T} \int_{\T} \frac{\sin(z^1(x) - z^1(x-y)))}{(\cosh(z^2(x) - z^2(x-y)) - \cos(z^1(x) - z^1(x-y)))^{2}}\\ 
\times \left(\sinh(z^{2}(x) - z^{2}(x-y))(z^{2}_{x}(x) - z^{2}_{x}(x-y)) + \sin(z^1(x) - z^1(x-y))(z^{1}_{x}(x) - z^{1}_{x}(x-y))\right) \\
\times \left(\frac{(\sin(z^1(x) - z^1(x-z)))^{2}(z^{1}_{x}(x) - z^{1}_{x}(x-z))^{2}}{(\cosh(z^2(x) - z^2(x-z)) - \cos(z^1(x) - z^1(x-z)))^{2}}\right.
\\
\left. - 
\frac{(\sin(z^1(x-y) - z^1(x-y-z)))^{2}(z^{1}_{x}(x-y) - z^{1}_{x}(x-y-z))^{2}}{(\cosh(z^2(x-y) - z^2(x-y-z)) - \cos(z^1(x-y) - z^1(x-y-z)))^2}
\right)
dy dz 
\end{multline*}

\begin{multline*}
 B_{53}(x) = -2\int_{\T} \int_{\T} \frac{\sin(z^1(x) - z^1(x-y)))}{\cosh(z^2(x) - z^2(x-y)) - \cos(z^1(x) - z^1(x-y))}\\ 
\times \left(\frac{\sin(z^1(x) - z^1(x-z))\cos(z^1(x) - z^1(x-z))(z^{1}_{x}(x) - z^{1}_{x}(x-z))^{3}}{(\cosh(z^2(x) - z^2(x-z)) - \cos(z^1(x) - z^1(x-z)))^{2}}\right.
\\
\left. - 
\frac{\sin(z^1(x-y) - z^1(x-y-z))\cos(z^1(x-y) - z^1(x-y-z))(z^{1}_{x}(x-y) - z^{1}_{x}(x-y-z))^{3}}{(\cosh(z^2(x-y) - z^2(x-y-z)) - \cos(z^1(x-y) - z^1(x-y-z)))^2}
\right)
dy dz 
\end{multline*}

\begin{multline*}
 B_{54}(x) = -2\int_{\T} \int_{\T} \frac{\sin(z^1(x) - z^1(x-y)))}{\cosh(z^2(x) - z^2(x-y)) - \cos(z^1(x) - z^1(x-y))}\\ 
\times \left(\frac{(\sin(z^1(x) - z^1(x-z)))^{2}(z^{1}_{x}(x) - z^{1}_{x}(x-z))(z^{1}_{xx}(x) - z^{1}_{xx}(x-z))}{(\cosh(z^2(x) - z^2(x-z)) - \cos(z^1(x) - z^1(x-z)))^{2}}\right.
\\
\left. - 
\frac{(\sin(z^1(x-y) - z^1(x-y-z)))^{2}(z^{1}_{x}(x-y) - z^{1}_{x}(x-y-z))(z^{1}_{xx}(x-y) - z^{1}_{xx}(x-y-z))}{(\cosh(z^2(x-y) - z^2(x-y-z)) - \cos(z^1(x-y) - z^1(x-y-z)))^2}
\right)
dy dz 
\end{multline*}

\begin{multline*}
 B_{55}(x) = 2\int_{\T} \int_{\T} \frac{\sin(z^1(x) - z^1(x-y)))}{\cosh(z^2(x) - z^2(x-y)) - \cos(z^1(x) - z^1(x-y))}\\ 
\times \left(\frac{(\sin(z^1(x) - z^1(x-z)))^{2}(z^{1}_{x}(x) - z^{1}_{x}(x-z))^{2}}{(\cosh(z^2(x) - z^2(x-z)) - \cos(z^1(x) - z^1(x-z)))^{3}}\right.
\\
\times(\sinh(z^{2}(x) - z^{2}(x-z))(z^{2}_{x}(x)-z^{2}_{x}(x-z)) + \sin(z^{1}(x) - z^{1}(x-z))(z^{1}_{x}(x)-z_{x}^{1}(x-z)))
\\
 - 
\frac{(\sin(z^1(x-y) - z^1(x-y-z)))^{2}(z^{1}_{x}(x-y) - z^{1}_{x}(x-y-z))^{2}}{(\cosh(z^2(x-y) - z^2(x-y-z)) - \cos(z^1(x-y) - z^1(x-y-z)))^{3}}
\\
\left.\times
(\sinh(z^{2}(x-y) - z^{2}(x-y-z))(z^{2}_{x}(x-y)-z^{2}_{x}(x-y-z)) + \sin(z^{1}(x-y) - z^{1}(x-y-z))(z^{1}_{x}(x-y)-z_{x}^{1}(x-y-z)))\right)
dy dz 
\end{multline*}

After that, we differentiate $I_{3}(x)$, resulting in:
\begin{align*}
 \pa_{x} I_{3}(x) = B_{61}(x) + B_{62}(x) + B_{63}(x) + B_{64}(x) + B_{65}(x) + B_{66}(x) + B_{67}(x)
\end{align*}
with
\begin{multline*}
 B_{61}(x)  = - \int_{\T} \int_{\T} \frac{\cos(z^1(x) - z^1(x-y))(z^{1}_{x}(x) - z^{1}_{x}(x-y))^{2}\sinh(z^{2}(x) - z^{2}(x-y))}{(\cosh(z^2(x) - z^2(x-y)) - \cos(z^1(x) - z^1(x-y)))^{2}} \\
 \times \left(\frac{\sin(z^1(x) - z^1(x-z))(z^{2}_{x}(x) - z^{2}_{x}(x-z))}{\cosh(z^2(x) - z^2(x-z)) - \cos(z^1(x) - z^1(x-z))}\right.
\\
\left. - 
\frac{\sin(z^1(x-y) - z^1(x-y-z))(z^{2}_{x}(x-y) - z^{2}_{x}(x-y-z))}{\cosh(z^2(x-y) - z^2(x-y-z)) - \cos(z^1(x-y) - z^1(x-y-z))}
\right)
dy dz 
\end{multline*}

\begin{multline*}
 B_{62}(x)  = - \int_{\T} \int_{\T} \frac{\sin(z^1(x) - z^1(x-y))(z^{1}_{xx}(x) - z^{1}_{xx}(x-y))\sinh(z^{2}(x) - z^{2}(x-y))}{(\cosh(z^2(x) - z^2(x-y)) - \cos(z^1(x) - z^1(x-y)))^{2}} \\
 \times \left(\frac{\sin(z^1(x) - z^1(x-z))(z^{2}_{x}(x) - z^{2}_{x}(x-z))}{\cosh(z^2(x) - z^2(x-z)) - \cos(z^1(x) - z^1(x-z))}\right.
\\
\left. - 
\frac{\sin(z^1(x-y) - z^1(x-y-z))(z^{2}_{x}(x-y) - z^{2}_{x}(x-y-z))}{\cosh(z^2(x-y) - z^2(x-y-z)) - \cos(z^1(x-y) - z^1(x-y-z))}
\right)
dy dz 
\end{multline*}

\begin{multline*}
 B_{63}(x)  = - \int_{\T} \int_{\T} \frac{\sin(z^1(x) - z^1(x-y))(z^{1}_{x}(x) - z^{1}_{x}(x-y))\cosh(z^{2}(x) - z^{2}(x-y))(z^{2}_{x}(x) - z^{2}_{x}(x-y))}{(\cosh(z^2(x) - z^2(x-y)) - \cos(z^1(x) - z^1(x-y)))^{2}} \\
 \times \left(\frac{\sin(z^1(x) - z^1(x-z))(z^{2}_{x}(x) - z^{2}_{x}(x-z))}{\cosh(z^2(x) - z^2(x-z)) - \cos(z^1(x) - z^1(x-z))}\right.
\\
\left. - 
\frac{\sin(z^1(x-y) - z^1(x-y-z))(z^{2}_{x}(x-y) - z^{2}_{x}(x-y-z))}{\cosh(z^2(x-y) - z^2(x-y-z)) - \cos(z^1(x-y) - z^1(x-y-z))}
\right)
dy dz 
\end{multline*}

\begin{multline*}
 B_{64}(x)  = 2\int_{\T} \int_{\T} \frac{\sin(z^1(x) - z^1(x-y))(z^{1}_{x}(x) - z^{1}_{x}(x-y))\sinh(z^{2}(x) - z^{2}(x-y))}{(\cosh(z^2(x) - z^2(x-y)) - \cos(z^1(x) - z^1(x-y)))^{3}} \\
\times \left(\sinh(z^{2}(x) - z^{2}(x-y))(z^{2}_{x}(x) - z^{2}_{x}(x-y)) + \sin(z^1(x) - z^1(x-y))(z^{1}_{x}(x) - z^{1}_{x}(x-y))\right) \\
 \times \left(\frac{\sin(z^1(x) - z^1(x-z))(z^{2}_{x}(x) - z^{2}_{x}(x-z))}{\cosh(z^2(x) - z^2(x-z)) - \cos(z^1(x) - z^1(x-z))}\right.
\\
\left. - 
\frac{\sin(z^1(x-y) - z^1(x-y-z))(z^{2}_{x}(x-y) - z^{2}_{x}(x-y-z))}{\cosh(z^2(x-y) - z^2(x-y-z)) - \cos(z^1(x-y) - z^1(x-y-z))}
\right)
dy dz 
\end{multline*}

\begin{multline*}
 B_{65}(x)  = - \int_{\T} \int_{\T} \frac{\sin(z^1(x) - z^1(x-y))(z^{1}_{x}(x) - z^{1}_{x}(x-y))\sinh(z^{2}(x) - z^{2}(x-y))}{(\cosh(z^2(x) - z^2(x-y)) - \cos(z^1(x) - z^1(x-y)))^{2}} \\
 \times \left(\frac{\cos(z^1(x) - z^1(x-z))(z^1_{x}(x) - z^1_{x}(x-z))(z^{2}_{x}(x) - z^{2}_{x}(x-z))}{\cosh(z^2(x) - z^2(x-z)) - \cos(z^1(x) - z^1(x-z))}\right.
\\
\left. - 
\frac{\cos(z^1(x-y) - z^1(x-y-z))(z^1_{x}(x-y) - z^1_{x}(x-y-z))(z^{2}_{x}(x-y) - z^{2}_{x}(x-y-z))}{\cosh(z^2(x-y) - z^2(x-y-z)) - \cos(z^1(x-y) - z^1(x-y-z))}
\right)
dy dz 
\end{multline*}

\begin{multline*}
 B_{66}(x)  = - \int_{\T} \int_{\T} \frac{\sin(z^1(x) - z^1(x-y))(z^{1}_{x}(x) - z^{1}_{x}(x-y))\sinh(z^{2}(x) - z^{2}(x-y))}{(\cosh(z^2(x) - z^2(x-y)) - \cos(z^1(x) - z^1(x-y)))^{2}} \\
 \times \left(\frac{\sin(z^1(x) - z^1(x-z))(z^{2}_{xx}(x) - z^{2}_{xx}(x-z))}{\cosh(z^2(x) - z^2(x-z)) - \cos(z^1(x) - z^1(x-z))}\right.
\\
\left. - 
\frac{\sin(z^1(x-y) - z^1(x-y-z))(z^{2}_{xx}(x-y) - z^{2}_{xx}(x-y-z))}{\cosh(z^2(x-y) - z^2(x-y-z)) - \cos(z^1(x-y) - z^1(x-y-z))}
\right)
dy dz 
\end{multline*}

\begin{multline*}
 B_{67}(x)  = \int_{\T} \int_{\T} \frac{\sin(z^1(x) - z^1(x-y))(z^{1}_{x}(x) - z^{1}_{x}(x-y))\sinh(z^{2}(x) - z^{2}(x-y))}{(\cosh(z^2(x) - z^2(x-y)) - \cos(z^1(x) - z^1(x-y)))^{2}} \\
 \times \left(\frac{\sin(z^1(x) - z^1(x-z))(z^{2}_{x}(x) - z^{2}_{x}(x-z))}{(\cosh(z^2(x) - z^2(x-z)) - \cos(z^1(x) - z^1(x-z)))^{2}}\right.
\\
\times(\sinh(z^{2}(x) - z^{2}(x-z))(z^{2}_{x}(x)-z^{2}_{x}(x-z)) + \sin(z^{1}(x) - z^{1}(x-z))(z^{1}_{x}(x)-z_{x}^{1}(x-z)))
\\ - 
\frac{\sin(z^1(x-y) - z^1(x-y-z))(z^{2}_{x}(x-y) - z^{2}_{x}(x-y-z))}{(\cosh(z^2(x-y) - z^2(x-y-z)) - \cos(z^1(x-y) - z^1(x-y-z)))^{2}}
\\
\left.\times
(\sinh(z^{2}(x-y) - z^{2}(x-y-z))(z^{2}_{x}(x-y)-z^{2}_{x}(x-y-z)) + \sin(z^{1}(x-y) - z^{1}(x-y-z))(z^{1}_{x}(x-y)-z_{x}^{1}(x-y-z)))\right)
dy dz 
\end{multline*}

The last term we differentiate is $I_{4}(x)$, which yields
\begin{align*}
 \pa_{x}I_{4}(x) = B_{71}(x) + B_{72}(x) + B_{73}(x) + B_{74}(x) + B_{75}(x) + B_{76}(x)
\end{align*}

\begin{multline*}
 B_{71}(x)  = - 2\int_{\T} \int_{\T} \frac{(\sin(z^1(x) - z^1(x-y)))(\cos(z^1(x) - z^1(x-y)))(z^{1}_{x}(x) - z^{1}_{x}(x-y))^{2}}{(\cosh(z^2(x) - z^2(x-y)) - \cos(z^1(x) - z^1(x-y)))^{2}} \\
 \times \left(\frac{\sin(z^1(x) - z^1(x-z))(z^{1}_{x}(x) - z^{1}_{x}(x-z))}{\cosh(z^2(x) - z^2(x-z)) - \cos(z^1(x) - z^1(x-z))}\right.
\\
\left. - 
\frac{\sin(z^1(x-y) - z^1(x-y-z))(z^{1}_{x}(x-y) - z^{1}_{x}(x-y-z))}{\cosh(z^2(x-y) - z^2(x-y-z)) - \cos(z^1(x-y) - z^1(x-y-z))}
\right)
dy dz 
\end{multline*}

\begin{multline*}
 B_{72}(x)  = - \int_{\T} \int_{\T} \frac{(\sin(z^1(x) - z^1(x-y)))^{2}(z^{1}_{xx}(x) - z^{1}_{xx}(x-y))}{(\cosh(z^2(x) - z^2(x-y)) - \cos(z^1(x) - z^1(x-y)))^{2}} \\
 \times \left(\frac{\sin(z^1(x) - z^1(x-z))(z^{1}_{x}(x) - z^{1}_{x}(x-z))}{\cosh(z^2(x) - z^2(x-z)) - \cos(z^1(x) - z^1(x-z))}\right.
\\
\left. - 
\frac{\sin(z^1(x-y) - z^1(x-y-z))(z^{1}_{x}(x-y) - z^{1}_{x}(x-y-z))}{\cosh(z^2(x-y) - z^2(x-y-z)) - \cos(z^1(x-y) - z^1(x-y-z))}
\right)
dy dz 
\end{multline*}

\begin{multline*}
 B_{73}(x)  = 2\int_{\T} \int_{\T} \frac{(\sin(z^1(x) - z^1(x-y)))^{2}(z^{1}_{x}(x) - z^{1}_{x}(x-y))}{(\cosh(z^2(x) - z^2(x-y)) - \cos(z^1(x) - z^1(x-y)))^{3}} \\
\times \left(\sinh(z^{2}(x) - z^{2}(x-y))(z^{2}_{x}(x) - z^{2}_{x}(x-y)) + \sin(z^1(x) - z^1(x-y))(z^{1}_{x}(x) - z^{1}_{x}(x-y))\right) \\
 \times \left(\frac{\sin(z^1(x) - z^1(x-z))(z^{1}_{x}(x) - z^{1}_{x}(x-z))}{\cosh(z^2(x) - z^2(x-z)) - \cos(z^1(x) - z^1(x-z))}\right.
\\
\left. - 
\frac{\sin(z^1(x-y) - z^1(x-y-z))(z^{1}_{x}(x-y) - z^{1}_{x}(x-y-z))}{\cosh(z^2(x-y) - z^2(x-y-z)) - \cos(z^1(x-y) - z^1(x-y-z))}
\right)
dy dz 
\end{multline*}

\begin{multline*}
 B_{74}(x)  = - \int_{\T} \int_{\T} \frac{(\sin(z^1(x) - z^1(x-y)))^{2}(z^{1}_{x}(x) - z^{1}_{x}(x-y))}{(\cosh(z^2(x) - z^2(x-y)) - \cos(z^1(x) - z^1(x-y)))^{2}} \\
 \times \left(\frac{\cos(z^1(x) - z^1(x-z))(z^{1}_{x}(x) - z^{1}_{x}(x-z))^{2}}{\cosh(z^2(x) - z^2(x-z)) - \cos(z^1(x) - z^1(x-z))}\right.
\\
\left. - 
\frac{\cos(z^1(x-y) - z^1(x-y-z))(z^{1}_{x}(x-y) - z^{1}_{x}(x-y-z))^{2}}{\cosh(z^2(x-y) - z^2(x-y-z)) - \cos(z^1(x-y) - z^1(x-y-z))}
\right)
dy dz 
\end{multline*}

\begin{multline*}
 B_{75}(x)  = - \int_{\T} \int_{\T} \frac{(\sin(z^1(x) - z^1(x-y)))^{2}(z^{1}_{x}(x) - z^{1}_{x}(x-y))}{(\cosh(z^2(x) - z^2(x-y)) - \cos(z^1(x) - z^1(x-y)))^{2}} \\
 \times \left(\frac{\sin(z^1(x) - z^1(x-z))(z^{1}_{xx}(x) - z^{1}_{xx}(x-z))}{\cosh(z^2(x) - z^2(x-z)) - \cos(z^1(x) - z^1(x-z))}\right.
\\
\left. - 
\frac{\sin(z^1(x-y) - z^1(x-y-z))(z^{1}_{xx}(x-y) - z^{1}_{xx}(x-y-z))}{\cosh(z^2(x-y) - z^2(x-y-z)) - \cos(z^1(x-y) - z^1(x-y-z))}
\right)
dy dz 
\end{multline*}

\begin{multline*}
 B_{76}(x)  = \int_{\T} \int_{\T} \frac{(\sin(z^1(x) - z^1(x-y)))^{2}(z^{1}_{x}(x) - z^{1}_{x}(x-y))}{(\cosh(z^2(x) - z^2(x-y)) - \cos(z^1(x) - z^1(x-y)))^{2}} \\
 \times \left(\frac{\sin(z^1(x) - z^1(x-z))(z^{1}_{x}(x) - z^{1}_{x}(x-z))}{(\cosh(z^2(x) - z^2(x-z)) - \cos(z^1(x) - z^1(x-z)))^{2}}\right.
\\
\times(\sinh(z^{2}(x) - z^{2}(x-z))(z^{2}_{x}(x)-z^{2}_{x}(x-z)) + \sin(z^{1}(x) - z^{1}(x-z))(z^{1}_{x}(x)-z_{x}^{1}(x-z)))
\\- 
\frac{\sin(z^1(x-y) - z^1(x-y-z))(z^{1}_{x}(x-y) - z^{1}_{x}(x-y-z))}{(\cosh(z^2(x-y) - z^2(x-y-z)) - \cos(z^1(x-y) - z^1(x-y-z)))^{2}} \\
\left.\times
(\sinh(z^{2}(x-y) - z^{2}(x-y-z))(z^{2}_{x}(x-y)-z^{2}_{x}(x-y-z)) + \sin(z^{1}(x-y) - z^{1}(x-y-z))(z^{1}_{x}(x-y)-z_{x}^{1}(x-y-z)))\right)
dy dz 
\end{multline*}

\section{Auxiliary tables}

%\begin{table}[ht]
%\begin{tabular}{|c|c|c|c|c|}
\begin{longtable}{|c|c|c|c|c|}
\hline
Integral & Degree Num. in $y$ & Degree Num. in $z$ & Degree Den. in $y$ & Degree Den. in $z$ \\
\hline
$ B_{11}$ &  6 & 4 & 2 & 4 \\
\hline
$ B_{12}$ & 2 & 4 & 2 & 4\\
\hline
$ B_{13}$ & 6 & 4 & 4 & 4\\
\hline
$ B_{14}$ & 3 & 4 & 2 & 4\\
\hline
$ B_{15}$ & 3 & 4 & 2 & 4\\
\hline
$ B_{16}$ & 3 & 8 & 2 & 8\\
\hline
$ B_{21}$ & 3 & 4 & 2 & 4\\
\hline
$ B_{22}$ & 5 & 4 & 4 & 4\\
\hline
$ B_{23}$ & 2 & 6 & 2 & 4\\
\hline
$ B_{24}$ & 2 & 4 & 2 & 4\\
\hline
$ B_{25}$ & 2 & 8 & 2 & 8\\
\hline
$ B_{31}$ & 3 & 4 & 2 & 4\\
\hline
$ B_{32}$ & 5 & 4 & 4 & 4\\
\hline
$ B_{33}$ & 2 & 4 & 2 & 4\\
\hline
$ B_{34}$ & 2 & 4 & 2 & 4\\
\hline
$ B_{35}$ & 2 & 8 & 2 & 8\\
\hline
$ B_{41}$ & 3 & 8 & 2 & 8\\
\hline
$ B_{42}$ & 5 & 8 & 4 & 8\\
\hline
$ B_{43}$ & 2 & 8 & 2 & 8\\
\hline
$ B_{44}$ & 2 & 8 & 2 & 8\\
\hline
$ B_{45}$ & 2 & 8 & 2 & 8\\
\hline
$ B_{46}$ & 2 & 8 & 2 & 8\\
\hline
$ B_{47}$ & 2 & 12 & 2 & 12\\
\hline
$ B_{51}$ & 3 & 8 & 2 & 8\\
\hline
$ B_{52}$ & 5 & 8 & 4 & 8\\
\hline
$ B_{53}$ & 2 & 8 & 2 & 8\\
\hline
$ B_{54}$ & 2 & 8 & 2 & 8\\
\hline
$ B_{55}$ & 2 & 12 & 2 & 12\\
\hline
$ B_{61}$ & 6 & 4 & 4 & 4\\
\hline
$ B_{62}$ & 4 & 4 & 4 & 4\\
\hline
$ B_{63}$ & 6 & 4 & 4 & 4\\
\hline
$ B_{64}$ & 8 & 4 & 6 & 4\\
\hline
$ B_{65}$ & 5 & 4 & 4 & 4\\
\hline
$ B_{66}$ & 5 & 4 & 4 & 4\\
\hline
$ B_{67}$ & 5 & 8 & 4 & 8\\
\hline
$ B_{71}$ & 6 & 4 & 4 & 4\\
\hline
$ B_{72}$ & 4 & 4 & 4 & 4\\
\hline
$ B_{73}$ & 8 & 4 & 6 & 4\\
\hline
$ B_{74}$ & 5 & 4 & 4 & 4\\
\hline
$ B_{75}$ & 5 & 4 & 4 & 4\\
\hline
$ B_{76}$ & 5 & 8 & 4 & 8\\
\hline
%\end{tabular}
\caption{Degree of the Taylor expansions in $y$ and $z$ of the different integrands written down as fractions $\frac{\text{numerator}}{\text{denominator}}$.}
\label{table_taylor}
%\end{table}
\end{longtable}

\begin{longtable}{|c|c|c|c|c|}
\hline
Integral & Bounded Region & Singularity Center & Singularity $y$ Axis & Singularity $z$ Axis \\
\hline
$ B_{11}$ & $-21.93^{58}_{09}$ & $[-4.9 \cdot 10^{-13}, 4.9 \cdot 10^{-13}]$ & $[-6.3 \cdot 10^{-7}, 6.3 \cdot 10^{-7}]$ & $-0.20^{71}_{50}$ \\
\hline
$ B_{12}$ & $19.1^{19}_{32}$ & $[8.9 \cdot 10^{-8}, 3.7 \cdot 10^{-7}]$ & $[-2.3 \cdot 10^{-3}, 2.5 \cdot 10^{-3}]$ & $0.2^{76}_{82}$\\
\hline
$ B_{13}$ & $-2.^{13}_{03}$ & $[-4.2 \cdot 10^{-8}, 5.6 \cdot 10^{-8}]$ & $[-3.2 \cdot 10^{-5}, 3.2 \cdot 10^{-5}]$ & $0.^{38}_{41}$\\
\hline
$ B_{14}$ & $4.^{39}_{41}$ & $[7.8 \cdot 10^{-8}, 1.9 \cdot 10^{-7}]$ & $[-1.1 \cdot 10^{-4}, 1.0 \cdot 10^{-4}]$ & $0.2^{76}_{82}$\\
\hline
$ B_{15}$ & $8.5^{35}_{43}$ & $[-9.0 \cdot 10^{-10}, 1.3 \cdot 10^{-7}]$ & $[-7.7 \cdot 10^{-5}, 6.1 \cdot 10^{-5}]$ & $0.0^{88}_{91}$\\
\hline
$ B_{16}$ & $1^{4.9}_{5.1}$ & $[-7.3 \cdot 10^{-8}, 7.4 \cdot 10^{-8}]$ & $[-7.0 \cdot 10^{-4}, 8.8 \cdot 10^{-4}]$ & $-0.^{42}_{28}$\\
\hline
$ B_{21}$ & $4.^{39}_{41}$ &  $[7.8 \cdot 10^{-8}, 1.9 \cdot 10^{-7}]$ & $[-1.1 \cdot 10^{-4}, 1.0 \cdot 10^{-4}]$ & $0.2^{76}_{82}$\\
\hline
$ B_{22}$ & $14.^{55}_{62}$ & $[-6.0 \cdot 10^{-8}, 6.0 \cdot 10^{-8}]$ & $[-5.9 \cdot 10^{-6}, 5.9 \cdot 10^{-6}]$ & $0.1^{77}_{91}$\\
\hline
$ B_{23}$ & $-13.35^{59}_{12}$ & $[-9.2 \cdot 10^{-10}, 9.2 \cdot 10^{-10}]$ & $[-4.1 \cdot 10^{-5}, 6.6 \cdot 10^{-5}]$ & $-0.0000^{46}_{33}$\\
\hline
$ B_{24}$ & $3^{0.9}_{1.1}$ & $[-2.5 \cdot 10^{-8}, 1.9 \cdot 10^{-7}]$ & $[-1.5 \cdot 10^{-4}, 1.9 \cdot 10^{-4}]$ & $0.04^{22}_{52}$\\
\hline
$ B_{25}$ & $-29.^{79}_{66}$ & $[-3.5 \cdot 10^{-7}, 3.5 \cdot 10^{-7}]$ & $[-1.7 \cdot 10^{-3}, 1.6 \cdot 10^{-3}]$ & $-0.^{28}_{19}$\\
\hline
$ B_{31}$ & $8.5^{35}_{43}$ & $[-9.0 \cdot 10^{-10}, 1.3 \cdot 10^{-7}]$ & $[-7.7 \cdot 10^{-5}, 6.1 \cdot 10^{-5}]$ & $0.0^{88}_{91}$\\
\hline
$ B_{32}$ & $10.^{09}_{14}$ & $[-6.2 \cdot 10^{-8}, 6.3 \cdot 10^{-8}]$ & $[-5.8 \cdot 10^{-6}, 5.8 \cdot 10^{-6}]$ & $0.28^{21}_{74}$\\
\hline
$ B_{33}$ & $15.^{49}_{51}$ & $[-3.6 \cdot 10^{-8}, 1.2 \cdot 10^{-7}]$ & $[-7.2 \cdot 10^{-5}, 9.5 \cdot 10^{-5}]$ & $0.02^{11}_{26}$\\
\hline
$ B_{34}$ & $-14.94^{63}_{18}$ & $[-5.5 \cdot 10^{-8}, 5.4 \cdot 10^{-8}]$ & $[-5.0 \cdot 10^{-5}, 4.9 \cdot 10^{-5}]$ & $-0.14^{52}_{36}$\\
\hline
$ B_{35}$ & $-9.^{96}_{88}$ & $[-6.3 \cdot 10^{-7}, 6.3 \cdot 10^{-7}]$ & $[-1.2 \cdot 10^{-3}, 1.2 \cdot 10^{-3}]$ & $-0.0^{87}_{48}$\\
\hline
$ B_{41}$ & $-6.^{62}_{47}$ & $[-7.3 \cdot 10^{-8}, 7.4 \cdot 10^{-8}]$ & $[-7.2 \cdot 10^{-4}, 8.0 \cdot 10^{-4}]$ & $-0.^{21}_{08}$\\
\hline
$ B_{42}$ & $15.^{13}_{60}$ & $[-5.5 \cdot 10^{-8}, 5.6 \cdot 10^{-8}]$ & $[-2.5 \cdot 10^{-5}, 2.5 \cdot 10^{-5}]$ & $-1.^{40}_{17}$\\
\hline
$ B_{43}$ & $-0.^{64}_{52}$ & $[-3.5 \cdot 10^{-7}, 3.5 \cdot 10^{-7}]$ & $[-1.7 \cdot 10^{-3}, 1.7 \cdot 10^{-3}]$ & $-0.^{13}_{05}$\\
\hline
$ B_{44}$ & $9.^{07}_{15}$ & $[-6.2 \cdot 10^{-7}, 6.3 \cdot 10^{-7}]$ & $[-1.3 \cdot 10^{-3}, 1.2 \cdot 10^{-3}]$ & $-0.0^{73}_{36}$\\
\hline
$ B_{45}$ & $-751.^{73}_{06}$ & $[-3.0 \cdot 10^{-7}, 3.0 \cdot 10^{-7}]$ & $[-1.4 \cdot 10^{-2}, 1.2 \cdot 10^{-2}]$ & $[-4.2,-3.8]$\\
\hline
$ B_{46}$ & $50.^{27}_{33}$ & $[-6.9 \cdot 10^{-7}, 6.9 \cdot 10^{-7}]$ & $[-5.3 \cdot 10^{-4}, 6.1 \cdot 10^{-4}]$ & $0.6^{53}_{99}$\\
\hline
$ B_{47}$ & $68^{5.1}_{7.2}$ & $[-2.1 \cdot 10^{-6}, 2.1 \cdot 10^{-6}]$ & $[-2.9 \cdot 10^{-2}, 3.2 \cdot 10^{-2}]$ & $[3.7,4.7]$\\
\hline
$ B_{51}$ & $21.5^{63}_{84}$ & $[-6.8 \cdot 10^{-8}, 8.0 \cdot 10^{-8}]$ & $[-2.1 \cdot 10^{-4}, 3.1 \cdot 10^{-4}]$ & $-0.^{22}_{19}$\\
\hline
$ B_{52}$ & $-15.^{13}_{02}$ & $[-8.9 \cdot 10^{-8}, 8.8 \cdot 10^{-8}]$ & $[-1.1 \cdot 10^{-5}, 1.1 \cdot 10^{-5}]$ & $-0.0^{77}_{39}$\\
\hline
$ B_{53}$ & $-58.^{31}_{27}$ & $[-5.1 \cdot 10^{-8}, 5.1 \cdot 10^{-8}]$ & $[-1.2 \cdot 10^{-3}, 9.7 \cdot 10^{-4}]$ & $-0.^{31}_{28}$\\
\hline
$ B_{54}$ & $-38.0^{75}_{55}$ & $[-5.5 \cdot 10^{-8}, 5.5 \cdot 10^{-8}]$ & $[-6.8 \cdot 10^{-4}, 6.7 \cdot 10^{-4}]$ & $-0.0^{32}_{19}$\\
\hline
$ B_{55}$ & $10^{3.5}_{4.1}$ & $[-5.1 \cdot 10^{-8}, 5.1 \cdot 10^{-8}]$ & $[-8.3 \cdot 10^{-3}, 8.5 \cdot 10^{-3}]$ & $0.^{34}_{52}$\\
\hline
$ B_{61}$ & $48.5^{07}_{82}$ & $[-4.9 \cdot 10^{-8}, 6.3 \cdot 10^{-8}]$ & $[2.5 \cdot 10^{-5}, 4.1 \cdot 10^{-5}]$ & $0.5^{09}_{27}$\\
\hline
$ B_{62}$ & $33.6^{46}_{95}$ & $[-2.2 \cdot 10^{-7}, 2.2 \cdot 10^{-7}]$ & $[-5.0 \cdot 10^{-4}, 5.4 \cdot 10^{-4}]$ & $0.4^{77}_{88}$\\
\hline
$ B_{63}$ & $-61^{5.3}_{4.7}$ & $[-4.7 \cdot 10^{-8}, 4.7 \cdot 10^{-8}]$ & $[-5.4 \cdot 10^{-6}, 5.4 \cdot 10^{-6}]$ & $0.4^{18}_{96}$\\
\hline
$ B_{64}$ & $49^{4.9}_{5.9}$ & $[-1.1 \cdot 10^{-7}, 1.1 \cdot 10^{-7}]$ & $[-4.7 \cdot 10^{-6}, 4.7 \cdot 10^{-6}]$ & $-0.^{54}_{41}$\\
\hline
$ B_{65}$ & $49.^{09}_{16}$ & $[-6.1 \cdot 10^{-8}, 6.1 \cdot 10^{-8}]$ & $[-4.9 \cdot 10^{-6}, 4.9 \cdot 10^{-6}]$ & $0.4^{77}_{88}$\\
\hline
$ B_{66}$ & $7.4^{18}_{57}$ & $[-6.2 \cdot 10^{-8}, 6.2 \cdot 10^{-8}]$ & $[-4.7 \cdot 10^{-6}, 4.7 \cdot 10^{-6}]$ & $-0.0^{85}_{63}$\\
\hline
$ B_{67}$ & $-24.^{62}_{15}$ & $[-5.7 \cdot 10^{-8}, 5.7 \cdot 10^{-8}]$ & $[-1.1 \cdot 10^{-5}, 1.1 \cdot 10^{-5}]$ & $-0.^{68}_{48}$\\
\hline
$ B_{71}$ & $-84.^{81}_{78}$ & $[-8.1 \cdot 10^{-10}, 8.0 \cdot 10^{-10}]$ & $[-3.5 \cdot 10^{-6}, 3.6 \cdot 10^{-6}]$ & $-0.62^{79}_{12}$\\
\hline
$ B_{72}$ & $-29.3^{69}_{59}$ & $[-1.4 \cdot 10^{-7}, 7.0 \cdot 10^{-8}]$ & $[-4.6 \cdot 10^{-6}, 4.6 \cdot 10^{-6}]$ & $-0.22^{39}_{18}$\\
\hline
$ B_{73}$ & $13^{7.8}_{8.1}$ & $[-8.8 \cdot 10^{-10}, 5.4 \cdot 10^{-10}]$ & $[-3.0 \cdot 10^{-6}, 2.8 \cdot 10^{-6}]$ & $0.8^{57}_{94}$\\
\hline
$ B_{74}$ & $7.0^{85}_{96}$ & $[-2.1 \cdot 10^{-9}, 2.4 \cdot 10^{-9}]$ & $[-3.5 \cdot 10^{-6}, 3.3 \cdot 10^{-6}]$ & $-0.22^{39}_{17}$\\
\hline
$ B_{75}$ & $-2.8^{23}_{14}$ & $[-4.2 \cdot 10^{-9}, 4.5 \cdot 10^{-9}]$ & $[-3.9 \cdot 10^{-6}, 3.7 \cdot 10^{-6}]$& $0.04^{29}_{29}$\\
\hline
$ B_{76}$ & $-35.^{63}_{52}$ & $[-5.0 \cdot 10^{-8}, 5.2 \cdot 10^{-8}]$ & $[-3.6 \cdot 10^{-6}, 3.6 \cdot 10^{-6}]$& $0.0^{42}_{68}$\\
\hline
%\end{tabular}
\caption{Detailed breakdown of the rigorous integration results.}
\label{table_results}
%\end{table}
\end{longtable}

\begin{longtable}{|c|c|c|}
\hline
Term and region & Number of integrals & Time (HH:MM) \\
\hline
$B_{11}$-$B_{76}$ (nonsingular) & 82 & 14:48\\
\hline
$B_{11}$-$B_{76}$ (center-singular) & 82 & 02:03 \\
\hline
$B_{11}$-$B_{76}$ (singular-first) & 82 & 01:26 \\
\hline
$B_{11}$-$B_{16}$ (singular-second) & 12 & 11:57\\
\hline
$B_{21}$-$B_{25}$ (singular-second) & 10 & 09:57\\
\hline
$B_{31}$-$B_{35}$ (singular-second) & 10 & 11:29\\
\hline
$B_{41}$-$B_{46}$ (singular-second) & 12 & 32:19\\
\hline
$B_{51}$-$B_{54}$ (singular-second) & 8 & 16:44\\
\hline
$B_{61}$-$B_{67}$ (singular-second) & 14 & 13:59\\
\hline
$B_{71}$-$B_{76}$ (singular-second) & 12 & 09:46 \\
\hline
$B_{47}$ (singular-second - subregions 1 and 2) & 4 & 35:53\\
\hline
$B_{47}$ (singular-second - subregions 3 and 4) & 4 & 60:48\\
\hline
$B_{47}$ (singular-second - subregions 5 and 6) & 4 & 82:02\\
\hline
$B_{55}$ (singular-second - subregions 1 and 2) & 4 & 16:02\\
\hline
$B_{55}$ (singular-second - subregions 3 and 4) & 4 & 56:12\\
\hline
$B_{55}$ (singular-second - subregions 5 and 6) & 4 & 74:50\\
\hline
%\end{tabular}
\caption{Performance of the code in the different integrals and regions.}
\label{tableruntime}
%\end{table}
\end{longtable}

 \bibliographystyle{abbrv}
 \bibliography{references}

\begin{tabular}{l}
\textbf{Diego C\'ordoba} \\
  {\small Instituto de Ciencias Matem\'aticas} \\
 {\small Consejo Superior de Investigaciones Cient\'ificas} \\
 {\small C/ Nicolas Cabrera, 13-15, 28049 Madrid, Spain} \\
  {\small Email: dcg@icmat.es} \\
\\
\textbf{Javier G\'omez-Serrano} \\
{\small Department of Mathematics} \\
{\small Princeton University}\\
{\small 610 Fine Hall, Washington Rd,}\\
{\small Princeton, NJ 08544, USA}\\
 {\small Email: jg27@math.princeton.edu} \\
    \\
\textbf{Andrej Zlato\v{s}} \\
{\small Department of Mathematics}\\
{\small University of Wisconsin}\\
{\small 480 Lincoln Dr}\\
{\small Madison, WI 53706, USA}\\
{\small Email: andrej@math.wisc.edu}\\
    \\

\end{tabular}

\end{document}